\theoremstyle{definition}
\newtheorem{nul}{}[section]
\newtheorem{rmk}[nul]{Remark}
\newtheorem{exm}[nul]{Example}
\newtheorem*{dfn*}{Definition}
\newtheorem*{axm*}{Axiom}
\newtheorem*{ntn*}{Notation}
\newtheorem*{exm*}{Example}
\newtheorem*{exr*}{Exercise}
\newtheorem*{int*}{Intuition}
\newtheorem*{qst*}{Question}
\newtheorem*{rmk*}{Remark}
\theoremstyle{plain}
\newtheorem{thm}[nul]{Theorem}
\newtheorem{prop}[nul]{Proposition}
\newtheorem{lem}[nul]{Lemma}
\newtheorem{cor}[nul]{Corollary}
\newtheorem*{thm*}{Theorem}
\newtheorem*{prop*}{Proposition}
\newtheorem*{cor*}{Corollary}
\newtheorem*{lem*}{Lemma}
\newtheorem*{cnj*}{Conjecture}
\let\oldwidetilde\widetilde
\protected\def\widetilde{\oldwidetilde}
\DeclareMathOperator{\gr}{\mathrm{gr}}
\DeclareMathOperator{\can}{\mathrm{can}}
\DeclareMathOperator{\F}{\mathbb{F}}
\DeclareMathOperator{\bF}{\mathbb{F}}
\DeclareMathOperator{\bZ}{\mathbb{Z}}
\DeclareMathOperator{\bQ}{\mathbb{Q}}
\DeclareMathOperator{\rN}{\mathrm{N}}
\DeclareMathOperator{\rF}{\mathrm{F}}
\DeclareMathOperator{\cO}{\mathcal{O}}
\newcommand{\TP}{\mathrm{TP}}
\newcommand{\Z}{\mathbb{Z}}
\def\Zp{\mathbb{Z}_p}
\def\O{\mathcal{O}}
\def\Q{\mathbb{Q}}
\DeclareSymbolFontAlphabet{\mathbbl}{bbold}
\def\Prism{\mathbbl{\Delta}}
\def\Prismhat{\widehat{\Prism}}
\DeclarePairedDelimiter\abs{\lvert}{\rvert}%
\let\oldabs\abs
\def\abs{\@ifstar{\oldabs}{\oldabs*}}
\let\oldtocsection=\tocsection
\let\oldtocsubsection=\tocsubsection
\let\oldtocsubsubsection=\tocsubsubsection
\renewcommand{\tocsection}[2]{\hspace{0em}\oldtocsection{#1}{#2}}
\renewcommand{\tocsubsection}[2]{\hspace{1em}\oldtocsubsection{#1}{#2}}
\renewcommand{\tocsubsubsection}[2]{\hspace{2em}\oldtocsubsubsection{#1}{#2}}
\newcommand{\NB}[1]{\todo[color=gray!40]{#1}}
\newcommand{\TODO}[1]{\todo[color=red]{#1}}
\newcommand{\NB}[1]{}
\newcommand{\TODO}[1]{}
\renewcommand{\todo}[1]{}
\renewcommand{\todo}[1]{}
\title{Exact bounds for even vanishing of $K_*(\bZ/p^n)$}
\author{Achim Krause}
\address{Department of Mathematics, University of Oslo, Oslo, Norway}
\email{achimkr@uio.no}
\author{Andrew Senger}
\address{Department of Mathematics, Harvard University, Cambridge, MA, USA}
\email{senger@math.harvard.edu}
\begin{document}
\begin{abstract}
  In this note, we prove that $K_{2i} (\Z/p^n) \neq 0$ if and only if $p-1$ divides $i$ and $0 \leq i \leq (p-1) p^{n-2}$, refining the even vanishing theorem of Antieau, Nikolaus and the first author in this case.
  As a corollary of our proof, we determine that the nilpotence order of $v_1$ in $\pi_* K(\Z/p^n)/p$ is equal to $\frac{p^n-1}{p-1}$.

  Our proof combines the recent crystallinity result for reduced syntomic cohomology of Hahn, Levy and the second author with the explicit complex computing the syntomic cohomology of $\O_K /\varpi^n$ constructed by Antieau, Nikolaus and the first author.
\end{abstract}
\maketitle

\setcounter{tocdepth}{1}
\tableofcontents
\vbadness 5000
	

\section{Introduction} \label{sec:intro}
In \cite{kzpn}, the first author together with Antieau and Nikolaus proves the following result:

\begin{thm}[{\cite[Theorem 1.4]{kzpn}}, Even vanishing theorem]
Let $K$ be a finite extension of $\bQ_p$ with ramification index $e$, $\cO_K$ its ring of integers, $\varpi\in \cO_K$ a uniformizer, and $n>0$.
Then for 
  \[
    i-1\geq\frac{p}{p-1} \left(\frac{p}{p-1}(p^{\lceil \tfrac{n}{e}\rceil} -1) - p^{\lceil \tfrac{n}{e} \rceil} (\lceil \tfrac{n}{e}\rceil-\tfrac{n}{e})\right),
  \]
  we have $K_{2i-2}(\cO_K/\varpi^n)=0$.
\end{thm}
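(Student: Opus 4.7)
The plan is to combine the motivic filtration on algebraic $K$-theory with an explicit model for syntomic cohomology of $\cO_K/\varpi^n$. Concretely, $K(\cO_K/\varpi^n)^{\wedge}_p$ carries a motivic filtration whose $i$-th graded piece is $\mathbb{Z}_p(i)(\cO_K/\varpi^n)[2i]$; since the syntomic complex $\mathbb{Z}_p(j)$ of a discrete $p$-torsion ring has cohomological amplitude at most $2$, only finitely many weights $j$ can contribute to $K_n$ for fixed $n$. For $n = 2i-2$ the contributing weights are $j = i-1$ and $j = i$, via the groups $H^0(\mathbb{Z}_p(i-1))$ and $H^2(\mathbb{Z}_p(i))$ respectively, so that vanishing of $K_{2i-2}$ reduces to vanishing of these two syntomic cohomology groups.

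The core of the argument is then an analysis of the explicit complex computing $\mathbb{Z}_p(j)(\cO_K/\varpi^n)$. Such a complex can be built from (Nygaard-completed) prismatic cohomology of $\cO_K/\varpi^n$ together with its Frobenius, Breuil--Kisin twists, and the Eisenstein polynomial $E(u)$ of $\varpi$; for an Artinian base of ramification $e$ and length $n$ it becomes a small complex of free modules over the prismatic base, whose cohomology is controlled by divisibility by $\varpi$ in certain Breuil--Kisin modules. The ceiling $\lceil n/e\rceil$ enters because $\varpi^n \mid p^{\lceil n/e\rceil}$ and no better in general, effectively reducing divisibility questions to ones over $\mathbb{Z}_p/p^{\lceil n/e\rceil}$. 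The double appearance of $p/(p-1)$ reflects the compounded effect of Frobenius twists on the Nygaard filtration, once for each of the two contributing cohomology groups.

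The main obstacle is the sharp numerical bookkeeping rather than any conceptual issue: the Frobenius twists and $E(u)$-divisibilities interact in a way that makes off-by-a-constant errors easy. I would first aim for a weaker bound with only a single factor of $p/(p-1)$ as a warm-up, then refine by a second application of the Frobenius comparison to pick up the second factor. Handling the boundary case --- where the inequality just barely holds --- is where I would expect off-by-one subtleties, consistent with the presence of the ceiling function in the final bound.
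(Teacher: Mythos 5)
Your opening reduction is correct and is exactly how \cite{kzpn} sets things up: granting the Dundas--Goodwillie--McCarthy identification of $K(\cO_K/\varpi^n)^{\wedge}_p$ with $\tau_{\geq 0}\mathrm{TC}(\cO_K/\varpi^n)$, the motivic spectral sequence together with $H^j(\Z_p(w)(\cO_K/\varpi^n))=0$ for $j>2$ and $H^0(\Z_p(w))=0$ for $w>0$ leaves only $H^2(\Z_p(i)(\cO_K/\varpi^n))$ in total degree $2i-2$ for $i\geq 2$. Two caveats even at this stage: the theorem is an integral statement, so you must also kill the prime-to-$p$ part (Gabber rigidity plus Quillen's computation of $K_*(\F_q)$, as recalled in the introduction here); and the claim that $\Z_p(j)$ of an arbitrary discrete $p$-torsion ring has amplitude $\leq 2$ is not true in that generality --- the concentration in degrees $\leq 2$ for $\cO_K/\varpi^n$ is itself a result proved via the explicit complex, not a formal input.

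The genuine gap is that the entire quantitative content of the theorem --- the vanishing of $H^2(\Z_p(i)(\cO_K/\varpi^n))$ once $i-1\geq\frac{p}{p-1}\bigl(\frac{p}{p-1}(p^{\lceil n/e\rceil}-1)-p^{\lceil n/e\rceil}(\lceil n/e\rceil-\tfrac{n}{e})\bigr)$ --- is nowhere derived; it is deferred as ``sharp numerical bookkeeping,'' with only a plan to recover it later. Nothing in the sketch identifies the mechanism that actually produces the bound: one must work in the square with terms $\rN^{\geq i}\Prismhat^{(1)}\{i\}$ and $\Prismhat^{(1)}\{i\}$, use that $\varphi$ multiplies the $\rF$-filtration of the monomial generators $z^k\prod_u f_u^{e_u}$ by $p$ while $\can$ does not, deduce that $\can-\varphi$ (and its $\nabla$-analogue) is an isomorphism above an explicit filtration threshold, and then bound the weights in which the remaining finite-rank, low-filtration piece (whose size is governed by $\lceil n/e\rceil$ through the generators $f_u$) can carry an $H^2$ class. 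Moreover, your heuristic for the shape of the bound is off: the two factors of $\tfrac{p}{p-1}$ cannot be attributed ``once for each of the two contributing cohomology groups,'' because $H^0(\Z_p(i-1))$ vanishes for all $i\geq 2$ for trivial weight reasons; both factors must be extracted from the analysis of $H^2(\Z_p(i))$ alone. As written, the proposal reproduces the known reduction to syntomic cohomology but contains no proof of the stated inequality, and the boundary behaviour you flag (the ceiling term) is precisely the part that remains to be done.
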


In the unramified case $e=1$ this bound simplifies to $i-1\geq \left(\frac{p}{p-1}\right)^2 (p^n-1)$. As indicated by the tables in \cite{kzpn}, this bound is not sharp. The purpose of this note is to determine sharp bounds in the case $K = \Q_p$. We prove:

\begin{thm} \label{thm:main}
  We have $K_{2i}(\Z/p^n)\neq 0$ if and only if $p-1\mid i$ and $i\leq (p-1)p^{n-2}$.
\end{thm}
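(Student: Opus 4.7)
The plan is to combine the motivic filtration on $\mathrm{TC}$ with the explicit syntomic complex of Antieau-Krause-Nikolaus and the crystallinity result of Hahn-Levy-Senger.

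First, by Dundas-Goodwillie-McCarthy applied to the nilpotent surjection $\Z/p^n \twoheadrightarrow \F_p$, and using that $K_{2i}(\F_p) = 0$ for $i > 0$, we have $K_{2i}(\Z/p^n) \cong \pi_{2i}\mathrm{TC}(\Z/p^n, (p))$, which is a $p$-group. The motivic filtration on $\mathrm{TC}$ then produces a spectral sequence $E_2^{s,w} = H^s(\Z_p(w)^{\mathrm{syn}}(\Z/p^n)) \Rightarrow \pi_{2w-s}\mathrm{TC}(\Z/p^n)$, and the syntomic complex of $\Z/p^n$ has bounded amplitude, so only finitely many weights $w$ contribute to any given $K_{2i}$; in particular the even $K$-groups are assembled from the $H^s$ with $s$ even.

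Second, I would invoke the AKN explicit complex, which presents $\Z_p(w)^{\mathrm{syn}}(\Z/p^n)$ as a complex of finite free $\Z_p$-modules whose differential is a divided-Frobenius-type operator with computable matrix entries. The question of whether $K_{2i}(\Z/p^n) = 0$ then becomes a question about whether a specific map between free $\Z_p$-modules is surjective, i.e.\ an arithmetic problem about $p$-adic valuations of entries of an integer matrix.

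Third, I would apply the Hahn-Levy-Senger crystallinity result to isolate the congruence condition $p-1 \mid i$: it identifies the relevant reduced syntomic complex with a piece of crystalline cohomology whose Tate-twist grading is concentrated in weights divisible by $p-1$, so that the contributions to $K_{2i}$ can only be nonzero when $i$ is divisible by $p-1$. In the allowed weights, a direct computation in the AKN complex should yield the sharp upper bound $i \leq (p-1)p^{n-2}$ and exhibit an explicit nonzero class throughout this range. The main obstacle is this last step: extracting the exact bound $(p-1)p^{n-2}$ (rather than the cruder bound of \cite{kzpn}) from the $p$-adic valuations of the AKN differential, and producing explicit non-vanishing classes of minimal valuation to witness the non-vanishing, requires careful bookkeeping of valuations, and is precisely what should also yield the corollary on the nilpotence order of $v_1$.
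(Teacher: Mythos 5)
Your reduction steps are fine and agree with the paper: DGM plus the motivic spectral sequence (with $H^j(\Z_p(i)(\Z/p^n))=0$ for $j>2$ and $H^0=0$ for $i>0$) identifies $K_{2i-2}(\Z/p^n)^{\wedge}_p$ with $H^2(\Z_p(i)(\Z/p^n))$, and finite generation reduces vanishing to the mod $p$ groups $H^2(\F_p(i)(\Z/p^n))$. But from there the proposal has a genuine gap, in two respects. First, the congruence $p-1\mid i$ does not come from the crystallinity result ``concentrating Tate twists in weights divisible by $p-1$''; that is not what \cite{HLS} says. The actual mechanism is the surjectivity $H^2(\F_p(i)(\Z_p))\twoheadrightarrow H^2(\F_p(i)(\Z/p^n))$ (\Cref{thm:surjective}) combined with the Liu--Wang computation $H^2(\F_p(*)(\Z_p))\cong \F_p[v_1]\{\lambda_1\partial\}$, so that every class in $H^2$ of $\Z/p^n$ is of the form $v_1^k\lambda_1\partial$ and hence sits in weight $p+k(p-1)$. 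Your sketch never invokes this surjectivity or the structure of $H^2$ for $\Z_p$, and without it the congruence is unjustified.

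Second, and more seriously, the entire quantitative content of the theorem --- that the cutoff is exactly $i\le (p-1)p^{n-2}$ --- is deferred to ``careful bookkeeping of valuations'' of a matrix for the AKN differential, i.e.\ it is asserted rather than proved, and the proposed route is not the one that works. After the reduction above, the theorem is equivalent to the statement that the $v_1$-torsion order of $\lambda_1\partial$ in $H^2(\F_p(*)(\Z/p^n))$ is exactly $p^{n-2}$ (\Cref{thm:v1-tors}), and neither half of this is a straightforward surjectivity/valuation computation. The vanishing $v_1^{p^{n-2}}\lambda_1\partial=0$ is proved by a product-structure argument in the AKN square: one shows $v_1^{p^{n-2}}\partial$ can be represented by a cocycle supported in the top-right corner (an inductive argument moving $z^{p^{n-1}}t^{-i}$ through the generators $f_j$ modulo the image of $\can-\varphi$), and then its product with $\lambda_1$, also supported there, vanishes by the Alexander--Whitney description of products (\Cref{rem:products}). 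The non-vanishing $v_1^{p^{n-2}-1}\lambda_1\partial\neq 0$ is \emph{not} obtained by exhibiting an explicit nonzero cycle in the complex for $\Z/p^n$ (one would have to rule out that it is a coboundary, which is the hard part); instead it uses \cite[Theorem 1.1]{HLS}, that $\F_p(*)(R)/v_1^{p^{n-2}}$ depends only on the derived mod $p^n$ reduction, to replace $\Z/p^n$ by the exterior algebra $\Lambda_{\Z_p}(\varepsilon)$, whose syntomic cohomology retracts onto that of $\Z_p$, where the class survives by the explicit $\Z_p$-computation (\Cref{prop:nonvanishing}). Since your plan neither isolates this torsion-order statement nor supplies an argument for either the vanishing or the non-vanishing at the precise exponent $p^{n-2}$, the proof is incomplete at exactly the point where the theorem improves on the bound of \cite{kzpn}.
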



To prove \Cref{thm:main}, we first note that since the groups $K_*(\Z/p^n)$ are finitely generated abelian groups, it follows by Quillen's computation of $K_* (\F_p)$ \cite{QuillenK} and Gabber rigidity \cite{Gabber} that it suffices to prove the theorem for the $p$-completed groups $\pi_* K (\Z/p^n)^{\wedge} _p$.
To access these, we make use of the equivalence $K (\Z / p^n)^{\wedge} _p \simeq \tau_{\geq 0} TC(\Z / p^n)$ coming from the Dundas--Goodwillie--McCarthy theorem \cite{DGM}
and the motivic spectral sequence of Bhatt--Morrow--Scholze \cite{BMS, BS, APC}:
\[H^j (\Z_p (i) (R)) \Rightarrow \pi_{2i-j} TC(R).\]

In the case of interest, we have $H^j (\Z_p (i) (\Z / p^n)) = 0$ when $j > 2$ and $H^0 (\Z_p (i) (\Z / p^n)) = 0$ when $i > 0$, so that for $i \geq 2$ we have $K_{2i-2} (\Z / p^n)^{\wedge} _p \cong H^2 (\Z_p (i) (\Z / p^n))$.
This reduces us to the study of the syntomic cohomology groups $H^2 (\Z_p (i) (\Z / p^n))$.
Since these are finitely generated $\Z_p$-modules, they vanish and only if their mod $p$ reduction
\[H^2 (\Z_p (i) (\Z / p^n))/ p \cong H^2 (\F_p (i) (\Z / p^n))\]
does.

Our first tool to study these groups is the following theorem of Hahn and Levy with the second author. The proof relies on a new crystallinity property for reduced syntomic cohomology.

\begin{thm}[{{\cite[Theorem 7.1]{HLS}}}] \label{thm:surjective}
  For $n \geq 2$, the map $H^2 (\F_p (i) (\Z_p)) \to H^2 (\F_p (i) (\Z / p^n))$ is surjective.
\end{thm}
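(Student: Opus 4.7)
The plan is to rephrase surjectivity as a vanishing statement in reduced syntomic cohomology, and then exploit the crystallinity input. I would first place the map in a cofiber sequence
\[
  \F_p(i)(\Z_p) \to \F_p(i)(\Z/p^n) \to \F_p(i)(\Z/p^n)_{/\Z_p},
\]
whose third term is the reduced (i.e.\ relative) syntomic cohomology of $\Z/p^n$ over $\Z_p$. The associated long exact sequence identifies the desired surjectivity on $H^2$ with the single vanishing
\[
  H^2\bigl(\F_p(i)(\Z/p^n)_{/\Z_p}\bigr) = 0,
\]
so the problem reduces to a cohomological vanishing for the relative object, which a priori has better properties than the absolute piece.

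To gain control of this relative object, I would invoke the crystallinity property for reduced syntomic cohomology. For $n \geq 2$ the ideal $(p^n) \subset \Z_p$ carries divided powers, so the surjection $\Z_p \twoheadrightarrow \Z/p^n$ is a genuine PD thickening, and the crystallinity theorem identifies $\F_p(i)(\Z/p^n)_{/\Z_p}$ with (a Nygaard-filtered, Frobenius-twisted, mod $p$ version of) the crystalline complex attached to this thickening. The crucial feature is that, whereas absolute syntomic cohomology of $\Z/p^n$ is not of small cohomological amplitude, the reduced/relative version over the one-dimensional base $\Z_p$ is computed by a short complex built essentially out of the PD envelope of $(p^n) \subset \Z_p$.

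With this concrete description in hand, the vanishing in question follows by analyzing the defining fiber sequence
\[
  \F_p(i)(-) \to \N^{\geq i}\Prism\{i\}/p \xrightarrow{\can - \varphi_i} \Prism\{i\}/p
\]
applied to the reduced pair: both the Nygaard-filtered source and the target live in cohomological degrees $\leq 1$ relative to $\Z_p$, so $H^2$ of the fiber vanishes precisely when $\can - \varphi_i$ is surjective on $H^1$ of the reduced piece. This last surjectivity is a finite-dimensional arithmetic check that can be performed directly on the explicit AKN complex computing syntomic cohomology of $\Z/p^n$. The main obstacle is establishing the crystallinity property itself, which is the principal new ingredient of HLS; granting it, the remaining vanishing is essentially forced by the short amplitude of the resulting crystalline complex together with an elementary surjectivity check on the concrete AKN model.
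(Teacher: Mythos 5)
Two preliminary remarks. First, this theorem is not proved in the paper at all: it is imported verbatim from [HLS, Theorem 7.1], so your sketch has to stand on its own as a proof rather than match an argument in the text. Second, your opening reduction is fine: since $\F_p(i)(\Z_p)$ has cohomological amplitude $[0,2]$, one has $H^3(\F_p(i)(\Z_p))=0$, and the long exact sequence then identifies surjectivity on $H^2$ with the vanishing of $H^2$ of the cofiber $\F_p(i)(\Z/p^n)_{/\Z_p}$ (for the implication you actually need, vanishing $\Rightarrow$ surjectivity, not even that input is required).

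The gap is in everything after that. The amplitude observation does no work: in the explicit model recalled in Section 2.1, the Nygaard-filtered and unfiltered prismatic complexes of $\Z/p^n$ already sit in degrees $[0,1]$ (each row of the square is a two-term complex via $\nabla$), absolutely and hence also relative to $\Z_p$, so ``$H^2$ of the fiber vanishes iff $\can-\varphi$ is surjective on $H^1$ of the reduced piece'' is available with no crystallinity whatsoever --- and it is not a reduction but a restatement: $H^2$ of a syntomic complex is by construction such a cokernel, and whether it vanishes is precisely what the theorem asserts. The step you defer as ``a finite-dimensional arithmetic check on the explicit AKN complex'' is therefore the entire content. In a fixed weight it is indeed a finite check after truncating the $\rF$-filtration, but it must be established for all weights $i$ and all $n$, which requires a genuine argument (compare the inductive lemma in Section 2.3 of this paper, which shows that explicit monomials $uz^{p^{n-1}-(n-j)p^j}f_jt^{-i}$ lie in the image of $\can-\varphi$ --- that is the kind of work such a ``check'' entails, and it is carried out there only for the single weight needed for the vanishing of $v_1^{p^{n-2}}\partial\lambda_1$, not for all $i$). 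Likewise, crystallinity is invoked only nominally: you never state a precise form of it, and ``short amplitude of the resulting crystalline complex'' cannot force the vanishing, since the same amplitude bounds hold for the absolute complexes, where $H^2$ is nonzero; also, the PD structure on $(p^n)\subset\Z_p$ exists for every $n\geq 1$, so that is not where the hypothesis $n\geq 2$ can enter, and the derived PD-envelope object attached to $\Z_p\twoheadrightarrow\Z/p^n$ is large rather than small. To turn the sketch into a proof you would either have to carry out the $\can-\varphi$ surjectivity argument for the reduced complex in all weights, or quote a crystallinity statement from [HLS] precise and strong enough to compute $H^2$ of the reduced theory --- at which point you are reproducing their proof of Theorem 7.1 rather than giving an independent one.
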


By \cite[Theorem 1.5]{LiuWang}, there are classes $v_1 \in H^0 (\F_p (p-1) (\Zp))$, $\lambda_1 \in H^1 (\F_p (p) (\Zp))$ and $\partial \in H^1 (\F_p (0) (\Zp))$ so that
\[H^2 (\F_p (*) (\Zp)) \cong \F_p [v_1] \{\lambda_1 \partial\}.\]
We will also reprove this theorem using the techniques of \cite{kzpn} in \Cref{sec:Zp-comp}.

As a consequence, even vanishing reduces to the question of the $v_1$-torsion order of $\lambda_1 \partial$.
\Cref{thm:main} therefore reduces to the following result:

\begin{prop} \label{thm:v1-tors}
  The $v_1$-torsion order of $\lambda_1 \partial$ in $H^2 (\F_p (*) (\Z/p^n))$ is equal to $p^{n-2}$.
  In other words, $v_1 ^{p^{n-2}} \lambda_1 \partial = 0$ and $v_1 ^{p^{n-2}-1} \lambda_1 \partial \neq 0$.
\end{prop}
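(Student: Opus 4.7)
The plan is to combine \Cref{thm:surjective} with a direct computation in the explicit complex of \cite{kzpn} for the mod-$p$ syntomic cohomology of $\Z/p^n$. By \Cref{thm:surjective} and the computation $H^2(\F_p(*)(\Zp)) \cong \F_p[v_1]\{\lambda_1\partial\}$ recalled above, the graded $\F_p[v_1]$-module $H^2(\F_p(*)(\Z/p^n))$ is cyclic, generated by the image of $\lambda_1\partial$. Hence it necessarily has the form $\F_p[v_1]/(v_1^k)\{\lambda_1\partial\}$ for some $k\in\{0,1,\dots,\infty\}$, and the task reduces to establishing the equality $k = p^{n-2}$.

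To pin down $k$, I would work with the explicit two-term complex of \cite{kzpn}, which in weight $i$ models the mod-$p$ syntomic complex $\fib(\phi-\can)$ on the Nygaard-filtered prismatic cohomology of $\Z/p^n$ as a map between finite-dimensional $\F_p$-vector spaces of truncated polynomials, with differential built from the Frobenius on $\Prism_{\Z/p^n}$. In this model, multiplication by $v_1$ is realized as a concrete shift in weight by $p-1$, and $\lambda_1\partial$ corresponds to an explicit $2$-cocycle in weight $p$. Consequently each $v_1^j\lambda_1\partial$ admits an explicit cocycle representative in weight $j(p-1)+p$, and the question of when it vanishes in cohomology becomes the combinatorial question of whether this cocycle lies in the image of the differential.

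The crucial step is to exhibit, for $j = p^{n-2}$, an explicit $1$-cochain whose boundary is the chosen representative of $v_1^{p^{n-2}}\lambda_1\partial$ (giving the upper bound $k \leq p^{n-2}$), and to verify that no such $1$-cochain exists for $j = p^{n-2}-1$ (giving the matching lower bound $k \geq p^{n-2}$). I expect the threshold $p^{n-2}$ to emerge from the depth at which the divided Frobenius on $\Prism_{\Z/p^n}$ first acquires the requisite $p$-divisibility in the relevant piece of the Nygaard filtration; in effect, the Nygaard filtration on $\Prism_{\Z/p^n}$ can absorb $v_1^j$ only up to this precise order. The main obstacle is carrying out this coboundary/no-coboundary bookkeeping uniformly in $n$, keeping track of which monomials in the truncated polynomial model survive in the appropriate weight range. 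Apart from this combinatorial check, no essentially new input beyond \Cref{thm:surjective} and the explicit model of \cite{kzpn} is required, and the reproof of the $\Z_p$-computation promised in \Cref{sec:Zp-comp} serves as a template for the kind of calculation we need to perform for $\Z/p^n$.
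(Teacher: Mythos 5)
Your reduction via \Cref{thm:surjective} to determining the $v_1$-torsion order of the cyclic module generated by $\lambda_1\partial$ matches the paper, and your plan for the upper bound (find an explicit coboundary for a representative of $v_1^{p^{n-2}}\lambda_1\partial$ in the \cite{kzpn} complex) is in the right spirit. But there is a genuine gap at the lower bound, and it is exactly the point where the paper needs input you have declared unnecessary. You assert that ``no essentially new input beyond \Cref{thm:surjective} and the explicit model of \cite{kzpn} is required,'' and propose to ``verify that no such $1$-cochain exists'' for $j=p^{n-2}-1$ by combinatorial bookkeeping; no mechanism for this verification is given. The paper does \emph{not} prove nonvanishing by inspecting the complex: it invokes the crystallinity theorem of Hahn--Levy--Senger (\cite[Theorem 1.1]{HLS}), which says $\F_p(*)(R)/v_1^{p^{n-2}}$ depends only on the derived reduction $R\otimes_{\Z}\Z/p^n$, and then plays $\Z/p^n$ off against the exterior algebra $\Lambda_{\Z_p}(\varepsilon)$, whose syntomic cohomology retracts onto that of $\Z_p$ (see \Cref{prop:nonvanishing} together with \Cref{cor:zp-syntomic}). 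Ruling out all potential bounding $1$-cochains directly would force you to control the entire square for $\Z/p^n$, including the $\nabla$-columns (for which no usable formulas are available in the $f_u$-monomial basis) and the non-linear identification $\theta$; you offer no filtration or leading-term argument that would make this tractable, and the fact that the authors---working with their own complex---chose to import a deep external theorem instead is a strong indication that this is not mere bookkeeping.

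A secondary, smaller divergence: for the vanishing half the paper does not bound the $2$-cocycle representing $v_1^{p^{n-2}}\lambda_1\partial$ directly. Instead it shows that the degree-one class $v_1^{p^{n-2}}\partial$ can be represented by an element supported purely in the top right corner of the square (by an inductive argument pushing $z^{p^{n-1}}t^{-p^{n-1}+p^{n-2}}$ into the image of $\can-\varphi$ through the chain $f_j\rightsquigarrow f_{j+1}$), and then concludes $v_1^{p^{n-2}}\partial\cdot\lambda_1=0$ because products of two classes supported in that corner vanish (\Cref{rem:products}, used in \Cref{cor:vanishing}). This trick is precisely what lets one avoid the $\nabla$-terms and the non-linearity of $\theta$ that your direct coboundary search would have to confront, so even for the upper bound you should expect to need either this product-structure argument or a substitute for it.
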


This result will be proven in \Cref{cor:vanishing} and \Cref{prop:nonvanishing}.
The nonvanishing $v_1 ^{p^{n-2}-1} \lambda_1 \partial \neq 0$ is an immediate consequence of \cite[Theorem 1.1]{HLS}, which implies that the map $\F_p (*) (\Z) /v_1 ^{p^{n-2}} \to \F_p (*) (\Z/p^n) / v_1 ^{p^{n-2}}$ admits a section.
We will prove the vanishing $v_1 ^{p^{n-2}} \lambda_1 \partial = 0$ in \Cref{sec:final} by an explicit analysis of $\bF_p(*)(\Z/p^n)$ using the description of syntomic cohomology of rings of this form deduced in \cite{kzpn}.


Let us conclude with some corollaries of \Cref{thm:v1-tors}.
First, we see that the bounds of \cite[Theorem 1.1(1)]{HLS} are optimal, at least in the unramified setting.

\begin{cor}
  The functor of animated $p$-complete rings
  \[R \mapsto \F_p (*) / v_1 ^{k}\]
  does not factor through the derived mod $p^n$ reduction
  \[R \mapsto R / p^n\]
  when $k > p^{n-2}$. 
\end{cor}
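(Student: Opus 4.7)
I would argue by contradiction. Suppose the functor $F \colon R \mapsto \F_p(*)(R)/v_1^k$ factors as $F = G \circ Q$, where $Q(R) = R/p^n$ is the derived mod $p^n$ reduction. Under this hypothesis, the natural map $F(\Z_p) \to F(\Z/p^n)$ induced by the reduction $\Z_p \to \Z/p^n$ is identified with $G$ applied to the unit map $\Z/p^n \to (\Z/p^n) \otimes^L_{\Z_p} (\Z/p^n)$. The multiplication map $\mu \colon (\Z/p^n) \otimes^L_{\Z_p} (\Z/p^n) \to \Z/p^n$ is a map of animated $\Z/p^n$-algebras that retracts the unit, so applying $G$ yields a retraction $F(\Z/p^n) \to F(\Z_p)$ of the natural map. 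In particular, $F(\Z_p) \to F(\Z/p^n)$ is injective on all homotopy groups.

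It then suffices to exhibit a class in $H^2 F(\Z_p)$ that is nonzero but maps to zero in $H^2 F(\Z/p^n)$. Using that the syntomic cohomology of $\Z_p$ vanishes above degree $2$ together with $H^2(\F_p(*)(\Z_p)) \cong \F_p[v_1]\{\lambda_1 \partial\}$, the long exact sequence of multiplication by $v_1^k$ identifies $H^2 F(\Z_p) \cong \F_p[v_1]/v_1^k\{\lambda_1 \partial\}$, in which $v_1^{k-1} \lambda_1 \partial$ is nonzero. By \Cref{thm:v1-tors}, however, $v_1^{k-1} \lambda_1 \partial = 0$ in $H^2(\F_p(*)(\Z/p^n))$ whenever $k > p^{n-2}$; by naturality of the long exact sequence, this class vanishes already in $H^2(\F_p(*)(\Z/p^n))/v_1^k$, and hence in its image inside $H^2 F(\Z/p^n)$, contradicting the injectivity above.

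The main technical point is the production of the retraction $G(\mu)$ in the first paragraph; once this is in hand, the remainder follows directly from \Cref{thm:v1-tors} and the known structure of the syntomic cohomology of $\Z_p$.
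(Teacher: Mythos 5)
Your argument is correct and is essentially the paper's own proof: both derive a contradiction by noting that a factorization through derived mod $p^n$ reduction would make $\F_p(*)(\Z_p)/v_1^k$ a retract of $\F_p(*)(\Z/p^n)/v_1^k$, which is incompatible with the vanishing $v_1^{p^{n-2}}\lambda_1\partial = 0$ of \Cref{thm:v1-tors} once $k > p^{n-2}$. You merely spell out the splitting (via the multiplication map retracting the unit $\Z/p^n \to \Z/p^n \otimes^L_{\Z_p} \Z/p^n$) and the identification $H^2(\F_p(*)(\Z_p)/v_1^k) \cong \F_p[v_1]/v_1^k\{\lambda_1\partial\}$, which the paper leaves implicit.
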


\begin{proof}
  If there were such a factorization, then the map
  $\F_p (*) (\Z_p) /v_1 ^{k} \to \F_p (*) (\Z/p^n) / v_1 ^{k}$
  would admit a section,
  which contradicts the relation $v_1 ^{p^{n-2}} \lambda_1 \partial = 0$ of \Cref{thm:v1-tors} when $k > p^{n-2}$.
\end{proof}

Moreover, when $p \geq 5$ we are able to determine the exact order of nilpotence of $v_1$ in $\pi_* K(\Z/p^n)/p$, since our determination of $H^2 (\F_p (*) (\Z/p^n))$ rules out the possibility of hidden extensions.

\begin{cor}
  Suppose that $p \geq 5$.
  Then the order of nilpotence of $v_1$ in 
  $\pi_* K (\Z / p^n)/p$ and $\pi_* TC (\Z / p^n)/p$
  is exactly equal to $[n]_p = \frac{p^n-1}{p-1}$.
  In other words, we have $v_1 ^{[n]_p} = 0$ and $v_1 ^{[n]_p-1} \neq 0$
  in these homotopy rings.
\end{cor}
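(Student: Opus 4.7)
The plan is to transport the nilpotence question to syntomic cohomology via the motivic spectral sequence
\[H^j(\F_p(i)(\Z/p^n)) \Rightarrow \pi_{2i-j} TC(\Z/p^n)/p,\]
which also computes $\pi_* K(\Z/p^n)/p$ in non-negative degrees by Dundas--Goodwillie--McCarthy. Since $\F_p(i)(\Z/p^n)$ has cohomological amplitude at most $2$, this spectral sequence degenerates and yields short exact sequences
\[0 \to H^2(\F_p(i+1)(\Z/p^n)) \to \pi_{2i} TC(\Z/p^n)/p \to H^0(\F_p(i)(\Z/p^n)) \to 0\]
realizing the motivic filtration. The class $v_1^k \in \pi_{2k(p-1)} TC/p$ has leading term $v_1^k \in H^0(\F_p(k(p-1)))$, and any hidden multiplicative extension would contribute to $H^2(\F_p(k(p-1)+1))$.

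The key observation is that by \Cref{thm:surjective} combined with \Cref{thm:v1-tors}, we have $H^2(\F_p(*)(\Z/p^n)) \cong \F_p[v_1]\{\lambda_1 \partial\}/v_1^{p^{n-2}}$, which is concentrated in weights $w = p + j(p-1)$ for $0 \leq j < p^{n-2}$. In particular, $H^2(\F_p(k(p-1)+1)(\Z/p^n)) = 0$ whenever $k > p^{n-2}$. Since both $[n]_p - 1 = p + p^2 + \cdots + p^{n-1}$ and $[n]_p$ strictly exceed $p^{n-2}$ for $n \geq 2$, there is no room for a hidden extension at the two relevant degrees, and
\[\pi_{2k(p-1)} TC(\Z/p^n)/p \cong H^0(\F_p(k(p-1))(\Z/p^n)) \quad \text{for } k \in \{[n]_p - 1,\, [n]_p\}.\]
Thus the nilpotence claim reduces to showing $v_1^{[n]_p - 1} \neq 0$ and $v_1^{[n]_p} = 0$ in $H^0$ at the corresponding weights.

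The final step would be to carry out this computation in $H^0$ by locating these classes in the explicit chain-level model of $\F_p(*)(\Z/p^n)$ from \cite{kzpn} used in the proof of \Cref{thm:v1-tors}. Equivalently, using the identification $H^0(\F_p(i)(\Z/p^n)) \cong K_{2i-1}(\Z/p^n)[p]$ (valid for $i \geq 1$ since $H^0(\Z_p(i)(\Z/p^n)) = 0$), one can extract the answer from the odd $K$-group data implicit in the proof of \Cref{thm:main}. The main obstacle is that, in contrast to $H^2$, $H^0$ does not admit a clean $\F_p[v_1]$-module description obtained purely from \Cref{thm:surjective}, so the nonvanishing of $v_1^{[n]_p - 1}$ requires producing an explicit nonzero cycle rather than appealing to a formal surjectivity. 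Finally, the hypothesis $p \geq 5$ ensures that $\pi_* K/p$ and $\pi_* TC/p$ carry associative graded-commutative ring structures in which the notion of nilpotence order is meaningful.
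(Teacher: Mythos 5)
Your reduction to syntomic cohomology is sound and matches the paper's argument: the motivic spectral sequence degenerates, in even total degrees the only possible hidden extension lands in the $2$-line, and by \Cref{thm:surjective} together with \Cref{thm:v1-tors} the group $H^2(\F_p(k(p-1)+1)(\Z/p^n))$ is spanned by $v_1^{k-1}\lambda_1\partial$ and hence vanishes for $k>p^{n-2}$, which covers both $k=[n]_p-1$ and $k=[n]_p$. (In fact, for the nonvanishing half you only need the image of $v_1^{[n]_p-1}$ in $H^0$ to be nonzero; the vanishing of $H^2$ in that degree is not required.)

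However, the proof is not complete: everything now hinges on the statement that the nilpotence order of $v_1$ in $H^0(\F_p(*)(\Z/p^n))$ is exactly $[n]_p$, and you explicitly leave this step open ("the final step would be to carry out this computation"). This is a genuine gap, and it cannot be filled the way you suggest: the results established in this paper (\Cref{thm:surjective}, \Cref{thm:v1-tors}, and the chain-level work in \Cref{sec:final}) control only $H^2$, and the proof of \Cref{thm:main} contains no information about the multiplicative structure of $H^0$ or about odd $K$-groups, so there is no "odd $K$-group data implicit in the proof" to extract. The needed input is an external one: the paper quotes \cite[Theorem 1.8]{kzpn}, which says precisely that $v_1^{[n]_p}=0$ and $v_1^{[n]_p-1}\neq 0$ in $H^0(\F_p(*)(\Z/p^n))$. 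With that citation added, your argument closes and coincides with the paper's proof; without it, neither the vanishing nor the nonvanishing in $\pi_*TC(\Z/p^n)/p$ is established.
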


\begin{proof}
  By \cite[Theorem 1.8]{kzpn}, we know that the order of nilpotence of $v_1 \in H^0 (\F_p (p-1) (\Z/p^n))$ is equal to $[n]_p$.
  It therefore follows that, if $v_1 ^{[n]_p} \neq 0 \in \pi_* TC (\Z / p^n)/p$,
  it must be detected in filtration $2$ in the motivic spectral sequence.
  But we have $\abs{v_1 ^{[n]_p}} = 2(p^n-1) = \abs{v_1 ^{[n]_p -1} \lambda_1 \partial}$.
  Since $[n]_p -1 \geq p^{n-2}$, it follows from \Cref{thm:surjective} and \Cref{thm:v1-tors} that the the $2$-line is $0$ in this degree, so we are done.

  Finally, the result for $K$-theory follows from the equivalence $K (\Z / p^n)/p \simeq \tau_{\geq 0} TC (\Z / p^n)/p$, which is a consequence of the Dundas--Goodwillie--McCarthy theorem.
\end{proof}

Finally, \Cref{thm:v1-tors} is the key ingredient in proving the following result of \cite{HLS}, which allows one to transport the results about syntomic cohomology in \cite{HLS} to results about $TC$ and $K$-theory.

\begin{cor}[{{\cite[Corollary 7.6]{HLS}}}]
  If $p = 2$, suppose that $k$ is divisible by $4$.
  Then the motivic spectral sequence
  $H^j (\F_p (i) (\Z/p^n)) / v_1 ^k \Rightarrow \pi_{2i-j} TC(\Z / p^n) / (p, v_1^k)$
  degenerates at the $E_2$-page.
\end{cor}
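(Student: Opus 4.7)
The plan is to use the bounded cohomological amplitude of $\F_p(i)(\Z/p^n)$ from \cite{kzpn} together with the $v_1$-torsion relation from \Cref{thm:v1-tors}. The first observation is that $\F_p(i)(\Z/p^n)$ is a two-term complex, so the $E_2$-page of the spectral sequence is concentrated in rows $j \in \{0, 1, 2\}$. Consequently, differentials $d_r$ for $r \geq 3$ vanish automatically (their targets lie in rows $j \geq 3$), and $d_2 \colon E_2^{1, i} \to E_2^{3, i+1}$ vanishes for target reasons. The only potentially nontrivial differential is $d_2 \colon E_2^{0, i} \to E_2^{2, i+1}$.

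To analyze this differential, I would invoke the multiplicative structure on the $E_2$-page. The $d_2$ is a derivation, and by \Cref{thm:surjective} combined with the Liu--Wang computation of $H^*(\F_p(*)(\Z_p))$, the $E_2^{2, *}$-row is a cyclic $\F_p[v_1]$-module generated by $\lambda_1 \partial$ with relation $v_1^{p^{n-2}} \lambda_1 \partial = 0$ from \Cref{thm:v1-tors}. Thus $d_2$ is determined by its values on the generators $v_1, \partial, \lambda_1$. Using the Leibniz rule together with the interplay between $v_1^k$-nilpotence on the source and $v_1^{\min(k, p^{n-2})}$-annihilation on the target, one checks that the induced $d_2$ on the $v_1^k$-quotient spectral sequence lands in $v_1^k$-divisible, hence zero, classes.

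The main obstacle is identifying the precise form of $d_2$ on the $v_1^k$-quotient spectral sequence. The $E_2$-page carries extra classes coming from the long exact sequence
\[
H^j(\F_p(i-(p-1)k)) \xrightarrow{v_1^k} H^j(\F_p(i)) \to H^j(\F_p(i)/v_1^k) \to H^{j+1}(\F_p(i-(p-1)k))[v_1^k],
\]
and one must check that the Leibniz-propagated differentials, combined with contributions from these extra classes (which sit in a lower-row summand in the next filtration), all cancel. At $p = 2$, the Leibniz formula $d_2(v_1^j) = j v_1^{j-1} d_2(v_1)$ loses information when $j$ is even, so the mod-$v_1^k$ reduction does not automatically annihilate $d_2$ on even powers; the hypothesis $4 \mid k$ provides the additional $v_1$-power cancellation on the target needed to recover vanishing via a mod-$4$ divisibility argument.
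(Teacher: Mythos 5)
The paper does not actually prove this corollary --- it is quoted from \cite[Corollary 7.6]{HLS}, with \Cref{thm:v1-tors} entering only as an input there --- so let me assess your argument on its own terms, and it has two concrete problems. The first is the bidegree of the differentials. In the motivic spectral sequence a differential must lower the stem $2i-j$ by exactly $1$ while strictly raising the weight $i$; hence a differential that raises the weight by $w\geq 1$ raises the cohomological degree $j$ by $2w+1\geq 3$. Your proposed ``only potentially nontrivial differential'' $d_2\colon E_2^{0,i}\to E_2^{2,i+1}$ preserves the stem ($2i\mapsto 2(i+1)-2=2i$) and so is not a differential of this spectral sequence at all, and the same goes for your $E_2^{1,i}\to E_2^{3,i+1}$. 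With the correct bidegrees, every differential out of rows $0,1,2$ lands in row $\geq 3$; since $H^j(\F_p(i)(\Z/p^n))=0$ for $j\geq 3$ (the complex of \cite{kzpn} is the total complex of a square) and the long exact sequence for $v_1^k$ then gives $H^j$ of the graded pieces of $TC(\Z/p^n)/(p,v_1^k)$ vanishing for $j\geq 3$, degeneration follows purely for degree reasons. In particular the entire Leibniz-rule analysis is unnecessary --- which is just as well, since the step you describe as ``the main obstacle'' is never actually carried out (``one checks\ldots all cancel''), and multiplicativity of the mod $(p,v_1^k)$ spectral sequence is itself questionable because $\Ss/(p,v_1^k)$ is not a ring spectrum (at best one has a module structure over the spectral sequence for $TC(\Z_p)$).

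The second problem is your reading of the hypothesis $4\mid k$ at $p=2$. It has nothing to do with a ``mod-$4$ divisibility'' cancellation in $d_2(v_1^j)=jv_1^{j-1}d_2(v_1)$; it is needed so that the $v_1^k$ self-map of the mod-$2$ Moore spectrum exists (Adams periodicity), i.e.\ so that $TC(\Z/2^n)/(2,v_1^k)$ and its filtered refinement with graded pieces $\F_2(i)(\Z/2^n)/v_1^k$ are defined in the first place. Relatedly, the actual role of \Cref{thm:v1-tors} (together with \Cref{thm:surjective}) in \cite{HLS} is not to kill differentials but to identify the $E_2$-page: the $2$-line is the cyclic $\F_p[v_1]$-module you describe, and because $v_1$ acts with torsion on $H^2$, the cohomology of the cofiber of $v_1^k$ differs from the naive quotients $H^j/v_1^k$ in rows $0$ and $1$ by $v_1^k$-torsion terms from the long exact sequence --- a point your sketch gestures at but does not resolve. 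So the correct route is: define the filtered object (this is where $4\mid k$ is used), observe the $E_2$-page is concentrated in rows $\leq 2$, and conclude degeneration by the $\geq 3$ jump in cohomological degree; your proposal as written does not establish this.
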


\begin{rmk}
  Our results can be extended from the case of $\Z/p^n$ to the unramified case $W(k) / p^n$, where $k$ is a finite field, using the same techniques. We have restricted ourselves to the case of $\Z/p^n$ here for the sake of simplicity of exposition, as well as because the computations of \cite{HLS} are only carried out in the case $\Z/p^n$.
\end{rmk}

\subsection{Acknowledgements}
During the course of this work, Andrew Senger was supported by NSF grant DMS-2103236. Achim Krause was supported during part of this work by the Deutsche Forschungsgemeinschaft (DFG, German Research Foundation) – project-id 427320536 – SFB 1442, as well as under Germany’s excellence strategy EXC 2044 390685587, Mathematics Münster: Dynamics–Geometry–Structure. Achim Krause also gratefully acknowledges support from the Institute for Advanced Study.

\section{Syntomic cohomology computations}
\subsection{Recollections from \cite{kzpn}}
In \cite{kzpn}, an explicit complex for the syntomic cohomology of rings of the form $\O_K$ or $\O_K / \varpi^n$ is given, where $\O_K$ is a characteristic $(0,p)$ discrete valuation ring with finite residue field $\F_q$, and $\varpi$ is a uniformizer for $\O_K$.
We begin by recalling the structure of this complex, then discuss how it interacts with the multiplicative structure on $\Z_p (*) (R)$.

\begin{thm}
  For $R = \O_K$ or $\cO_K/\varpi^n$, $\bZ_p(i)(R)$ is equivalent to the total complex of a square
\[
\begin{tikzcd}
  \rN^{\geq i} \Prismhat^{(1)}_{R/W(\bF_q)\llbracket z\rrbracket}\{i\}\rar{\nabla}\dar{\can-\varphi} & \rN^{\geq i} \Prismhat^{(1),\nabla}_{R/W(\bF_q)\llbracket z\rrbracket}\{i\}\dar{\can-\varphi^\nabla}\\
  \Prismhat^{(1)}_{R/W(\bF_q)\llbracket z\rrbracket}\{i\}\rar{\nabla} & \Prismhat^{(1),\nabla}_{R/W(\bF_q)\llbracket z\rrbracket}\{i\}
\end{tikzcd}
\]
\end{thm}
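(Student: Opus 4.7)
The plan is to build the square out of two nested fiber sequences: one coming from the definition of syntomic cohomology as the $(\can-\varphi)$-eigenspace on Nygaard-filtered prismatic cohomology, and one expressing absolute prismatic cohomology in terms of its relative counterpart over the auxiliary $\delta$-ring $A := W(\bF_q)\llbracket z\rrbracket$. Combining these, the total complex of the square falls out.

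More precisely, I would start from the general description
\[
\bZ_p(i)(R) \simeq \fib\!\left(\rN^{\geq i}\Prismhat^{(1)}_R\{i\} \xrightarrow{\can - \varphi} \Prismhat^{(1)}_R\{i\}\right),
\]
which is how syntomic cohomology is defined in the motivic filtration setup of Bhatt--Morrow--Scholze and Bhatt--Scholze. This already produces the two vertical arrows. The remaining content is to give a two-term description of each column. Here one uses that $A$, equipped with the $\delta$-structure $\varphi(z)=z^p$, maps surjectively to $R$ (via $z \mapsto \varpi$ modulo $E(z)$, resp.\ $(E(z), z^n)$), so that $A$ furnishes a natural ``cover'' in the absolute prismatic site. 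Faithfully flat descent along this cover collapses to a two-term complex, because $A$ is relatively one-dimensional as a $\delta$-ring over $\Z_p$: the unique tangent direction $d/dz$ contributes a single ``conjugate'' object $\Prismhat^{(1),\nabla}_{R/A}$ together with a connection $\nabla$, yielding the fiber sequence
\[
\Prismhat^{(1)}_R \;\longrightarrow\; \Prismhat^{(1)}_{R/A} \xrightarrow{\;\nabla\;} \Prismhat^{(1),\nabla}_{R/A},
\]
and similarly with $\rN^{\geq i}$ applied. Identifying this conjugate object amounts to computing the prismatic cohomology of the self-product $A \otimes A$ in the absolute prismatic site, recognizing it as an extension of $A$ by a twist of $A$ that records the derivation in the $z$-direction, and transporting this to $R$.

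The main obstacle, which is where the technical heart of \cite{kzpn} lies, is to produce the square as a literal commutative diagram: one must exhibit a Frobenius $\varphi^\nabla$ on the conjugate object so that $\nabla \circ (\can - \varphi) = (\can - \varphi^\nabla) \circ \nabla$, and must verify that this Frobenius respects the Nygaard filtration. In effect, $\varphi^\nabla$ has to be constructed by hand from the Frobenius on the relative prism, corrected for the failure of $\varphi$ on $A$ to commute literally with the derivation $d/dz$. Once $\varphi^\nabla$ and the Nygaard compatibility are in place, the total complex of the square is, by inspection, the iterated fiber of the two displayed maps, which agrees with the combined fiber sequence computing $\bZ_p(i)(R)$. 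I would follow the prismatic-envelope construction of \cite{kzpn} closely for this step, since the explicit formulas there are what make the descent spectral sequence degenerate to a two-term square in the first place.
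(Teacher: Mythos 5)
Your outline matches the strategy of \cite{kzpn}, which is where this statement is proven (the present paper only recalls it): absolute syntomic cohomology is the fiber of $\can-\varphi$ on Nygaard-filtered Frobenius-twisted prismatic cohomology, and the columns are resolved by descent along the Breuil--Kisin-type $\delta$-ring $W(\bF_q)\llbracket z\rrbracket$, whose Čech nerve (the cosimplicial objects $N_\bullet$, $P_\bullet$ appearing later in this paper) collapses via the map $\theta$ to the two-term $\nabla$-complex, with the construction of $\varphi^\nabla$ compatible with the Nygaard filtration being the technical heart, exactly as you say. So this is essentially the same approach as the cited proof, presented correctly in outline.
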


Here, we have for $R = \O_k /\varpi^n$:
\begin{enumerate}
  \item All terms carry an additional filtration (``$\rF$'') that they are complete with respect to.
  \item $\Prismhat^{(1)}_{R/W(\bF_q)\llbracket z\rrbracket}$ is $\rF$-completely free over $W(\bF_q)$ on monomials $z^k\prod_u f_u^{e_u}$ where $k<n$ and $e_u$ is a sequence with $e_u<p$ for all $u$ and $e_u=0$ for all but finitely many $u$. Here $z$ has $\rF$-filtration $1$ and $f_u$ has $\rF$-filtration $np^u$. $\Prismhat^{(1)}_{R/W(\bF_q)\llbracket z\rrbracket}\{i\}$ is a free module of rank $1$ over the non Breuil-Kisin-twisted version, on a generator we call $t^{-i}$. (This is called $w(s^i)$ in \cite{kzpn}, arising from a Breuil-Kisin orientation $s$ of non-Frobenius twisted prismatic cohomology, but the notation $t^{-1}$ chosen here highlights the relationship to the element $t^{-1}$ in the Tate spectral sequence computing topological periodic homology $\TP_{-2}$).
  \item $\rN^{\geq j}\Prismhat^{(1)}_{R/W(\bF_q)\llbracket z\rrbracket}\{i\}$ is $\rF$-completely free over $W(\bF_q)$ on the elements $E(z)^{\max(j-\sum p^ue_u,0)} z^k\prod_u f_u^{e_u}t^{-i}$ where $k<n$ and $e_u$ is a sequence with $e_u<p$ for all $u$ and $e_u=0$ for all but finitely many $u$. Here, the symbol $E(z)$ is related to a choice of an Eisenstein polynomial for $\O_K$.
  \item $\Prismhat^{(1),\nabla}_{R/W(\bF_q)\llbracket z\rrbracket}\{i\}$ is isomorphic to $\Prismhat^{(1)}_{R/W(\bF_q)\llbracket z\rrbracket}\{i\}$, with a shift in $\rF$-filtration ($\rF^{\geq k}\Prismhat^{(1),\nabla}\{i\}\cong \rF^{\geq k-1} \Prismhat^{(1)}\{i\}$.)
\item $\rN^{\geq j}\Prismhat^{(1),\nabla}_{R/W(\bF_q)\llbracket z\rrbracket}\{i\}$ is isomorphic to $\rN^{\geq j-1}\Prismhat^{(1)}_{R/W(\bF_q)\llbracket z\rrbracket}\{i\}$ (with the same shift in $\rF$-filtration). We think of $\Prismhat^{(1),\nabla}\{i\}$ as a free $\Prismhat^{(1)}_{R/W(\bF_q)\llbracket z\rrbracket}$-module on the symbol $\nabla z t^{-i}$ of Nygaard and $\rF$-filtration $1$, but warn that neither the Frobenius map $\varphi^\nabla$ nor the product structure on $\bZ_p(*)(R)$ behave intuitively with respect to this notation. 
\end{enumerate}

For $R=\cO_K$, the description is analogous, but $\Prismhat^{(1)}_{R/W(\bF_q)\llbracket z\rrbracket}$ is instead free on monomials $z^k$ with no condition on $k$ (and no $f_u$).

There is a multiplicative structure on $\bZ_p(*)(R)$, which we will need to understand in terms of the above complex. For this, let us write $N_s = \rN^{\geq *}\Prismhat^{(1)}_{R/W(\bF_q)\llbracket z_0,\ldots,z_s\rrbracket}\{*\}$ and $P_s = \Prismhat^{(1)}_{R/W(\bF_q)\llbracket z_0,\ldots,z_s\rrbracket}\{*\}$. In \cite{kzpn} the rows of the above square are identified up to quasi-isomorphism with the total complexes of the cosimplicial diagrams $N_\bullet$ and $P_\bullet$, using a map $\theta: P_1 \to \Prismhat^{(1),\nabla}_{R/W(\bF_q)\llbracket z\rrbracket}\{i\}$. The vertical maps correspond to the difference of the maps $\can,\varphi: N_s\to P_s$. Rewriting equalizers as totalisation of a cosimplicial diagram by right Kan extension, we obtain a bicosimplicial ring
\[
  \begin{tikzcd}
    N_0\dar[shift right,"{(\mathrm{id},\can)}",swap]\dar[shift left,"{(\mathrm{id},\varphi)}"] \rar[shift left] \rar[shift right] & N_1 \lar\dar[shift right,"{(\mathrm{id},\can)}",swap]\dar[shift left,"{(\mathrm{id},\varphi)}"] \rar[shift left=2]\rar[shift right=2]\rar & \ldots\lar[shift right=1]\lar[shift left=1]\\
    N_0\times P_0\dar[shift left=2]\dar\uar\dar[shift right=2] \rar[shift left]\rar[shift right] & N_1 \times P_1 \uar\lar\dar[shift left=2]\dar\dar[shift right=2] \rar[shift left=2]\rar[shift right=2]\rar&  \ldots\lar[shift right=1]\lar[shift left=1]\\
    \vdots\uar[shift left]\uar[shift right] & \vdots \uar[shift left]\uar[shift right]& \\
  \end{tikzcd}
\]
with quasi-isomorphism to the original square obtained by projection to the second factor in the second row, and using the map $\theta$ in the second column. For a bicosimplicial ring $R_{\bullet,\bullet}$, the product structure on its total complex is given by the (iterated) Alexander-Whitney diagonal, where for $\alpha\in R_{n,m}$, $\beta\in R_{n',m'}$, we have
\[
  \alpha\beta = (-1)^{mn'} \sigma_* \alpha \cdot \rho_* \beta,
\]
where $\sigma: [n]\times [m] \to [n+m]\times [n'+m']$ is the ``front face'' inclusion in each factor, and $\rho: [n']\times [m']\to [n+m]\times [n'+m']$ analogously the ``back face'' inclusion in each factor.

\begin{rmk}
  \label{rem:products}
We observe the following special cases:
\begin{enumerate}
  \item If $\alpha \in N_0$ is a cycle in the above bicosimplicial diagram, and $\beta$ is arbitrary in bidegree $(n,m)$, $\alpha\beta$ is represented by multiplying $\beta$ with the image of $\alpha$ in bidegree $(n,m)$ (which is unique by virtue of $\alpha$ being a cycle). Since the map $\theta$ used to identify the bicosimplicial diagram with the above square in \cite{kzpn} is linear over $d^0: P_0 \to P_1$, this means that in the square, multiplication by a cycle $\alpha$ in bidegree $(0,0)$ is really computed pointwise in the obvious way.
  \item Writing $\partial$ for the cycle in bidegree $(0,1)$ represented by $(0,1)\in N_0\times P_0$, and letting $\beta\in N_s$ be an arbitrary element of bidegree $(s,0)$, we have that $\partial \beta$ is represented by $(0,(-1)^s\varphi(\beta))$, with $\varphi$ arising from the back-face map. Since under $\theta$, $\varphi$ corresponds to $\varphi^\nabla$ in the second column, we get that in the original square, multiplication with $\partial$ corresponds to applying $\varphi$ and $-\varphi^\nabla$. (Note that if $\beta$ is a cycle, these also agree with $\pm\can(\beta)$).
  \item A $1$-cycle in the square which is purely supported in bidegree $(1,0)$ can be lifted to a $1$-cycle in the bicosimplicial diagram which is purely supported in this bidegree. Since bidegree $(2,0)$ maps to zero in the square, this shows that elements purely supported in bidegree $(1,0)$ multiply to $0$ with each other. Analogously, elements purely supported in bidegree $(0,1)$ multiply to zero.
\end{enumerate}
Note that due to the non-linearity of the map $\theta$ used to identify the cosimplicial resolutions $N_\bullet$ and $P_\bullet$ with the shorter complexes involving $\nabla$, general products of classes in bidegrees $(0,1)$ and $(1,0)$ with each other are a bit subtle. We will however only require the above observations.
\end{rmk}

\begin{rmk}
For $j>0$, the associated graded $\gr^j_{\rF}$ of each term in the above square is free of rank $1$ over $W(\bF_q)$. For $j=0$, the left hand terms have $\gr^0_{\rF}$ free of rank $1$ over $W(\bF_q)$, while the right hand terms have vanishing $\gr^0_{\rF}$. In particular, all terms are $p$-torsion free, and the mod $p$ reduction $\bF_p(i)(R)$ is equivalent to the total complex of the mod $p$ reduction of the above square.
\end{rmk}

\subsection{Computations in the case $R = \Z_p$}\label{sec:Zp-comp}
We now specialize to the case $\cO_K=\bZ_p$ and let the Eisenstein polynomial be given by $E(z)=z+p$.
We give a computation of $\F_p (*) (\Zp)$ in terms of the square of \cite{kzpn}, in particular identifying representatives for the classes $\partial$, $v_1$, and $\lambda_1$
that will be used in \Cref{sec:final} below to prove that $v_1^{p^{n-2}} \partial \lambda_1 = 0$ in $\F_p (*) (\Z/p^n)$.

To start, we collect some formulas:

\begin{lem}
  In the square for $\bF_p(i)(\Zp)$, we have:
  \begin{enumerate}
    \item $\can(z^kE(z)^it^{-i}) = z^{k+i}t^{-i}$
    \item $\varphi(z^kE(z)^it^{-i}) = z^{pk}t^{-i}$
    \item $\nabla(z^kt^{-i}) = kz^{k-1}\nabla z t^{-i}$ mod $\rF^{\geq k+1}$
    \item $\nabla(z^kE(z)^it^{-i}) = 0$ mod $\rF^{\geq k+1}$
    \item $\can(z^{k-1}E(z)^{i-1}\nabla z t^{-i}) = z^{k+i-2}\nabla z t^{-i}$
    \item $\varphi^\nabla(z^{k-1}E(z)^{i-1}\nabla z t^{-i}) = z^{pk-1}\nabla z t^{-i}$ mod $\rF^{\geq pk+1}$
  \end{enumerate}
\end{lem}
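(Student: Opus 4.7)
The plan is to verify each formula by unwinding the definitions of the relevant maps in the explicit complex of \cite{kzpn}, specialized to $\cO_K = \Z_p$ with $E(z) = z+p$. Since each term of the square is $p$-torsion free (as noted in the preceding remark), we work throughout in the mod $p$ reduction, in which $E(z) \equiv z$ in $\F_p[z]$; this substantially simplifies the basis manipulations. For (1) and (5), $\can$ is literally the inclusion of the Nygaard piece into the un-Nygaard-filtered module, so $\can(z^k E(z)^i t^{-i}) = z^k E(z)^i t^{-i}$ on the nose, and rewriting via $E(z) \equiv z$ in the basis of $\Prismhat^{(1)}\{i\}$ (respectively $\Prismhat^{(1),\nabla}\{i\}$) yields the claimed expressions.

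For (2) and (6), the Frobenius is the divided Frobenius on $\rN^{\geq i}\{i\}$, and the Breuil--Kisin twist is normalized so that $\varphi(E(z)^i t^{-i}) = t^{-i}$ (and likewise $\varphi^\nabla(E(z)^{i-1} \nabla z\, t^{-i}) = \nabla z\, t^{-i}$ on the $\nabla$-side, using the identification $\rN^{\geq i}\Prismhat^{(1),\nabla}\{i\} \cong \rN^{\geq i-1}\Prismhat^{(1)}\{i\}$). Combined with $\varphi(z) = z^p$, this immediately gives (2); for (6) one additionally uses that $\varphi^\nabla(\nabla z) = z^{p-1}\nabla z$ modulo higher $\rF$-filtration, which arises from $\nabla(z^p) = p z^{p-1}\nabla z$ with the factor of $p$ absorbed into the divided Frobenius on the $\nabla$-side.

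For (3), the connection $\nabla$ acts modulo higher $\rF$-filtration as the formal derivation sending $z \mapsto \nabla z$ and annihilating $t^{-i}$, so Leibniz gives $\nabla(z^k t^{-i}) \equiv k z^{k-1}\nabla z\, t^{-i}$ modulo $\rF^{\geq k+1}$. For (4), which implicitly requires $i \geq 1$, reducing mod $p$ converts $z^k E(z)^i t^{-i}$ into $z^{k+i} t^{-i}$; applying (3) produces $(k+i) z^{k+i-1}\nabla z\, t^{-i}$, which has $\rF$-filtration $k+i \geq k+1$ and hence vanishes modulo $\rF^{\geq k+1}$. The main effort throughout lies in bookkeeping the Breuil--Kisin twist and $\rF$-filtration conventions simultaneously; no genuine obstacle arises beyond carefully unwinding the setup of \cite{kzpn}.
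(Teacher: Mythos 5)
Items (1)--(5) of your proposal are essentially sound and follow the same route as the paper: work mod $p$ so that $E(z)\equiv z$, use that $\can$ is the canonical (inclusion-type) map in the chosen coordinates, use the Breuil--Kisin normalization $\varphi(t^{-1})=t^{-1}/\varphi(E(z))$ for (2), and use the leading-term formula for $\nabla$ for (3). Be aware, though, that the statements you assert outright -- that $\can$ acts ``on the nose'' on the $\nabla$-side basis (item (5)) and that $\nabla$ acts as a derivation with $\nabla(t^{-i})\equiv 0$ modulo higher $\rF$-filtration (item (3)) -- are exactly the content of specific results in \cite{kzpn} (the proof of Corollary 4.44 and Lemma 4.61, respectively; in the latter the units appearing are $1$ mod $p$), and these need to be invoked rather than taken for granted. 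Your deduction of (4) from (3) via $z^kE(z)^i\equiv z^{k+i}$ and a filtration count (with the implicit hypothesis $i\geq 1$, which you correctly flag) is a small but legitimate shortcut relative to the paper.

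Item (6) is where there is a genuine gap. Your parenthetical normalization $\varphi^\nabla(E(z)^{i-1}\nabla z\, t^{-i})=\nabla z\, t^{-i}$ is false: it contradicts the very formula (6) you are proving at $k=1$, which gives $z^{p-1}\nabla z\, t^{-i}$ modulo $\rF^{\geq p+1}$, and it is also inconsistent with your own subsequent rule $\varphi^\nabla(\nabla z)=z^{p-1}\nabla z$. More seriously, that subsequent rule is precisely the nontrivial content of (6), and it cannot be obtained by treating $\varphi^\nabla$ as a semilinear map determined by its value on the symbol $\nabla z$: the paper explicitly warns that $\varphi^\nabla$ does not behave intuitively with respect to the identification $\rN^{\geq j}\Prismhat^{(1),\nabla}\{i\}\cong\rN^{\geq j-1}\Prismhat^{(1)}\{i\}$, so ``absorbing the factor of $p$ into the divided Frobenius'' is an assertion, not an argument. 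The paper instead proves (6) indirectly, from the commutativity $\varphi^\nabla\circ\nabla=\nabla\circ\varphi$ on associated graded pieces: by \cite[Lemma 4.61]{kzpn} one has $\nabla(z^{pk}t^{-i})=upk\,z^{pk-1}\nabla z\,t^{-i}$ and $\nabla(z^kE(z)^it^{-i})=u'pk\,z^{k-1}E(z)^{i-1}\nabla z\,t^{-i}$ with units $u,u'\in 1+p\bZ_p$, so combining with (2) and cancelling the common factor $pk$ gives $\varphi^\nabla(z^{k-1}E(z)^{i-1}\nabla z\,t^{-i})=\tfrac{u}{u'}z^{pk-1}\nabla z\,t^{-i}$, which reduces to the claim mod $p$. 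Your remark about $\nabla(z^p)=pz^{p-1}\nabla z$ does gesture at this mechanism, but without the commuting-square argument (or some other derivation of the action of $\varphi^\nabla$ in these coordinates) your treatment of (6) does not constitute a proof.
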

\begin{proof}
  The first statement follows from $E(z)=z+p$, and the second from $\varphi(t^{-1}) = \frac{t^{-1}}{\varphi(E(z))}$ (compare \cite[Remark 4.9]{kzpn}, noting the Frobenius twist here). For the fifth statement note the last sentence in the proof of \cite[Corollary 4.44]{kzpn}. The third and fourth statements are obtained from the proof of \cite[Lemma 4.61]{kzpn}, noting that the unit that appears there is $1$ mod $p$. Finally, for the last statement, consider the commutative diagram
  \[
    \begin{tikzcd}
      \gr^k_{\rF} \rN^{\geq i} \Prismhat^{(1)}_{\bZ_p/\bZ_p\llbracket z\rrbracket}\{i\} \rar{\nabla}\dar{\varphi} & \gr^k_{\rF} \rN^{\geq i} \Prismhat^{(1),\nabla}_{\bZ_p/\bZ_p\llbracket z\rrbracket}\{i\}\dar{\varphi^\nabla} \\ 
      \gr^{pk}_{\rF} \Prismhat^{(1)}_{\bZ_p/\bZ_p\llbracket z\rrbracket}\{i\} \rar{\nabla} & \gr^{pk}_{\rF} \Prismhat^{(1),\nabla}_{\bZ_p/\bZ_p\llbracket z\rrbracket}\{i\}.
    \end{tikzcd}
  \]
  By \cite[Lemma 4.61]{kzpn}, the bottom horizontal map takes $z^{pk}t^{-i}\mapsto upk z^{pk-1}\nabla z t^{-i}$ and the top horizontal map takes $z^kE(z)^it^{-i}$ to $u'pkz^{k-1}E(z)^{i-1} \nabla z t^{-i}$, with units $u,u'\in 1+p\bZ_p$. Since $\varphi(z^kE(z)^it^{-i}) = z^{pk}$, we get $\varphi^\nabla(z^{k-1}E(z)^{i-1}\nabla z t^{-i}) = \frac{u}{u'} z^{pk-1}\nabla z t^{-i}$, which reduces to the claim mod $p$.
\end{proof}

Note that the first and third identities above imply that in the square for $\bF_p(i)(\bZ_p)$, $(\can-\varphi)(z^kE(z)^it^{-i})$ has leading term (in $\rF$-filtration) $z^{k+i}t^{-i}$ if $k+i<pk$, i.e. $k>\frac{i}{p-1}$. Similarly, $(\can-\varphi^\nabla)(z^{k-1}E(z)^{i-1}\nabla z t^{-i})$ has leading term $z^{k+i-2}\nabla z t^{-i}$ if $k+i-1 <pk$, i.e. $k>\frac{i-1}{p-1}$. So in the subcomplex formed by the square
\[
\begin{tikzcd}
  \rF^{>\frac{i}{p-1}}\rN^{\geq i} \Prismhat^{(1)}_{\bZ_p/\bZ_p\llbracket z\rrbracket}\{i\}/p \rar{\nabla}\dar{\varphi} & \rF^{>\frac{i-1}{p-1}}\gr^k_{\rF} \rN^{\geq i} \Prismhat^{(1),\nabla}_{\bZ_p/\bZ_p\llbracket z\rrbracket}\{i\}/p\dar{\varphi^\nabla} \\ 
  \rF^{>\frac{pi}{p-1}}    \gr^{pk}_{\rF} \Prismhat^{(1)}_{\bZ_p/\bZ_p\llbracket z\rrbracket}\{i\}/p \rar{\nabla} & \rF^{>\frac{p(i-1)}{p-1}}\gr^{pk}_{\rF} \Prismhat^{(1),\nabla}_{\bZ_p/\bZ_p\llbracket z\rrbracket}\{i\}/p.
\end{tikzcd}
\]
the vertical maps are isomorphisms, and we may thus truncate the original square for $\bF_p(i)(\bZ_p)$ to a quasi-isomorphic one of the form
\[
\begin{tikzcd}
  \rF^{\leq\frac{i}{p-1}}\rN^{\geq i} \Prismhat^{(1)}_{\bZ_p/\bZ_p\llbracket z\rrbracket}\{i\}/p \rar{\nabla}\dar{\varphi} & \rF^{\leq\frac{i-1}{p-1}}\gr^k_{\rF} \rN^{\geq i} \Prismhat^{(1),\nabla}_{\bZ_p/\bZ_p\llbracket z\rrbracket}\{i\}/p\dar{\varphi^\nabla} \\ 
  \rF^{\leq\frac{pi}{p-1}}    \gr^{pk}_{\rF} \Prismhat^{(1)}_{\bZ_p/\bZ_p\llbracket z\rrbracket}\{i\}/p \rar{\nabla} & \rF^{\leq\frac{p(i-1)}{p-1}}\gr^{pk}_{\rF} \Prismhat^{(1),\nabla}_{\bZ_p/\bZ_p\llbracket z\rrbracket}\{i\}/p.
\end{tikzcd}
\]
We now compute its cohomology for specific values of $i$.

\begin{exm}[weight $i=0$]
  \label{exm:weight0}
  For $i=0$, the truncated square takes the form
  \begin{center}
  \begin{tikzcd}
    \bF_p \cdot 1\dar\rar & 0\dar \\
    \bF_p \cdot 1\rar & 0.
  \end{tikzcd}
  \end{center}
  Here the vertical map $\can-\varphi$ is zero, and we obtain  $H^0(\bF_p(0)(\bZ_p))\cong \bF_p$ and $H^1(\bF_p(0)(\bZ_p))\cong \bF_p$, both generated by classes represented by $1$. In fact, $1$ is in the kernel of $\nabla$ (since it is actually equalized by the two bottom coboundary maps in the cosimplicial diagram giving rise to $\nabla$), and so these classes are also represented by $1$ in the untruncated square. We denote the generator of $H^0$ by $1$, and the generator of $H^1$ by $\partial$ (compare Remark \ref{rem:products}(2)).
\end{exm}

\begin{exm}[weight $1 \leq i \leq p-2$]
  \label{exm:weighti}
  For $1\leq i\leq p-2$, the truncated square takes the form
  \begin{center}
  \begin{tikzcd}
    \bF_p \cdot E(z)^it^{-i}\dar\rar & 0\dar \\
    \bF_p \cdot \{t^{-i},\ldots, z^i t^{-i}\}\rar & \{\nabla z t^{-i},\ldots, z^{i-2}\nabla z t^{-i}\}.
  \end{tikzcd}
  \end{center}
  We have $(\can-\varphi)(E(z)^i t^{-i}) = -t^{-i}$ mod $\rF^{\geq 1}$. So this means that the homology with respect to the vertical differentials looks like
  \begin{center}
  \begin{tikzcd}
    0 \rar & 0 \\
    \bF_p \cdot \{zt^{-i},\ldots,z^i t^{-i}\}\rar & \{\nabla z t^{-i},\ldots, z^{i-2}\nabla z t^{-i}\}.
  \end{tikzcd}
  \end{center}
  with horizontal differential induced by $\nabla$. Since $\nabla(z^kt^{-i}) = kz^{k-1}\nabla z t^{-i}$ mod $\rF^{\geq k+1}$, the bottom horizontal differential is surjective, and its kernel is generated by $z^it^{-i}$. This means that $\bF_p(i)(\bZ_p)$ has cohomology concentrated in $H^1$, generated by a class represented in the original untruncated square by $z^it^{-i}$ in the bottom left corner plus something in the top right corner. We denote this class in $H^1(\bF_p(i)(\bZ_p))$ for $1\leq i\leq p-2$ by $\gamma_i$.
\end{exm}

\begin{exm}[weight $i=p-1$]
  \label{exm:weightpminus1}
  For $i=p-1$, the truncated square takes the form
  \begin{center}
  \begin{tikzcd}
    \bF_p \cdot \{E(z)^{p-1}t^{-p+1},zE(z)^{p-1}t^{-p+1}\}\rar\dar & 0\dar\\
    \bF_p \cdot \{t^{-p+1},\ldots,z^pt^{-p+1}\} \rar & \bF_p \cdot \{\nabla z t^{-p+1},\ldots, z^{p-3} \nabla z t^{-p+1}\}.
  \end{tikzcd}
  \end{center}
  We have $(\can-\varphi)(E(z)^{p-1}t^{-p+1}) = t^{-p+1}$ mod $\rF^{\geq 1}$, and $(\can-\varphi)(zE(z)^{p-1}t^{-p+1}) = z^pt^{-p+1} - z^pt^{-p+1} = 0$. So $H^0(\bF_p(p-1)(\bZ_p)) = \bF_p$, generated by a class represented by $zE(z)^{p-1}t^{-p+1}$. The lift to the untruncated complex is a priori given by $zE(z)^{p-1}t^{-p+1}$ plus terms in $\rF^{\geq 2}$, but since $zE(z)^{p-1}t^{-p+1}$ is in the kernel of $(\can-\varphi)$ in the untruncated complex and $(\can-\varphi)$ takes $\rF^{>1}$ isomorphically to $\rF^{>p}$, the representative in the untruncated complex is also given by $zE(z)^{p-1}t^{-p+1}$. We denote this class by $v_1$, since it detects the chromatic $v_1$ (compare \cite[Section 6, in particular Theorem 6.3]{kzpn}).

  Since the bottom map $\nabla$ takes $z^kt^{-p+1}$ to $kz^{k-1}\nabla z t^{-p+1}$ mod $\rF^{\geq k+1}$, it takes the span of $z^kt^{-p+1}$ for $1\leq k\leq p-2$ isomorphically to the bottom right term. So we obtain $H^2(\bF_p(p-1)(\bZ_p))=0$ and $H^1(\bF_p(p-1)(\bF_p))\cong \bF_p^2$, with generators represented by $z^{p-1}t^{-p+1}$ and $z^{p}t^{-p+1}$ in the bottom left corner. Note that by the description of product structures in Remark \ref{rem:products}, the latter class agrees with $\partial v_1$. The former can be lifted to some representative in the untruncated complex given by $z^{p-1}t^{-p+1}$ in the bottom left corner and some element in the top right corner. We will refer to this element by $\gamma_{p-1}$.
\end{exm}

\begin{exm}[weight $i=p$]
  For $i=p$, the truncated square takes the form
  \begin{center}
  \begin{tikzcd}
    \bF_p \cdot \{E(z)^pt^{-p},zE(z)^pt^{-p}, [z^2E(z)^pt^{-p}]\}\rar\dar & \bF_p \cdot E(z)^{p-1}\nabla z t^{-p}\dar\\
    \bF_p \cdot \{t^{-p},\ldots,z^{p+1}t^{-p}, [z^{p+2}t^{-p}]\} \rar & \bF_p \cdot \{\nabla z t^{-p},\ldots, z^{p-1} \nabla z t^{-p}\}.
  \end{tikzcd}
  \end{center}
  where the terms in brackets are present only in the case $p=2$. To study the homology of this double complex we first consider the vertical differentials. We have $(\can-\varphi)(E(z)^pt^{-p}) = -t^{-p}$ mod $\rF^{\geq 1}$, and $(\can-\varphi)(zE(z)^pt^{-p}) = -z^pt^{-p}$ mod $\rF^{\geq p+1}$. In the case $p=2$ we also have $(\can-\varphi)(z^2E(z)^2t^{-2}) = 0$, and in fact this element is $v_1^2$. In the right column, we have $(\can-\varphi^\nabla)(E(z)^{p-1}\nabla zt^{-p}) = z^{p-1}\nabla z t^{-p} - z^{p-1}\nabla z t^{-p}=0$. So the vertical homology looks like
  \begin{center}
  \begin{tikzcd}
    \bF_p \cdot \{[z^2E(z)^pt^{-p}]\}\rar & \bF_p \cdot E(z)^{p-1}\nabla z t^{-p}\\
    \bF_p \cdot \{zt^{-p},\ldots,z^{p-1}t^{-p},z^{p+1}t^{-p}, [z^{p+2}t^{-p}]\} \rar & \bF_p \cdot \{\nabla z t^{-p},\ldots, z^{p-1} \nabla z t^{-p}\}.
  \end{tikzcd}
  \end{center}
  Since the top left class for $p=2$ is a representative for $v_1^2$, it is a permanent cycle, so the top horizontal differential is zero. The bottom horizontal differential is induced by $\nabla$, and takes $z^kt^{-p}$ to $kz^{k-1}\nabla z t^{-p}$ mod $\rF^{\geq k+1}$. Since it also vanishes on $z^{p+1}t^{-p}$ (and $z^{p+2}t^{-p}$ for $p=2$) for filtration reasons, the horizontal homology looks like
  \begin{center}
  \begin{tikzcd}
    \bF_p \cdot \{[z^2E(z)^pt^{-p}]\} & \bF_p \cdot E(z)^{p-1}\nabla z t^{-p}\\
    \bF_p \cdot \{z^{p+1}t^{-p}, [z^{p+2}t^{-p}]\}  & \bF_p \cdot \{z^{p-1} \nabla z t^{-p}\}.
  \end{tikzcd}
  \end{center}
  We conclude that $H^0(\bF_p(p)(\bZ_p))=0$ ($\bF_2$ if $p=2$), $H^1(\bF_p(p)(\bZ_p)=\bF_p^2$ ($\bF_2^3$ if $p=2$) and $H^2(\bF_p(p)(\bZ_p)) = \bF_p$. The generator arising from $E(z)^{p-1}\nabla z t^{-p}$ lifts to a cycle in the untruncated complex purely supported in the top right corner, with leading term $E(z)^{p-1}\nabla z t^{-p}$ plus elements from $\rF^{\geq 2}$. We denote this element of $H^1(\bF_p(p)(\bZ_p))$ by $\lambda_1$. By Remark \ref{rem:products}(2),the cycle in the bottom right corner generating $H^2(\bF_p(p)(\bZ_p))$ is then a representative of $\partial \lambda_1$. The generator $z^{p+1}t^{-p}$ in the bottom left corner is a representative of $v_1\gamma_1$, since $\can(zE(z)^{p-1}t^{-p})=z^pt^{-p}$. And finally, the element $z^{p+2}t^{-p}$ in the case $p=2$ agrees with $\partial v_1^2$.
\end{exm}

Having named $\partial, v_1$ and $\lambda_1$, we can compute $\bF_p(*)(\bZ_p)/v_1$.

\begin{prop}
  \leavevmode
  \begin{enumerate}
    \item $\bF_p(*)(\bZ_p)/v_1$ in weights $*\geq p+1$ is $0$.
    \item $\bF_p(*)(\bZ_p)/v_1$ in weight $*=p$ has $H^0=0$, $H^1\cong \bF_p$ generated by $\lambda_1$, and $H^2\cong \bF_p$ generated by $\partial\lambda_1$.
    \item $\bF_p(*)(\bZ_p)/v_1$ in weights $*=i$ for $1\leq i\leq p-1$ has $H^0=H^2=0$ and $H^1\cong \bF_p$ generated by $\gamma_i$.
    \item $\bF_p(*)(\bZ_p)/v_1$ in weight $0$ has $H^0\cong \bF_p$ generated by $1$ and $H^1\cong \bF_p$ generated by $\partial$.
  \end{enumerate}
\end{prop}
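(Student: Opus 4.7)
The plan is to handle each of the four cases separately using the cofiber sequence
\[
\bF_p(i-(p-1))(\Zp) \xrightarrow{v_1} \bF_p(i)(\Zp) \to \bF_p(i)(\Zp)/v_1
\]
together with the computations of $\bF_p(j)(\Zp)$ for $0 \leq j \leq p$ from the examples above. Parts (2), (3), (4) follow routinely from the associated long exact sequence: for $0 \leq i \leq p-2$ the source is in non-positive weight and vanishes, so the quotient equals $\bF_p(i)(\Zp)$; for $i = p-1$ and $i = p$, one verifies from the explicit bases that multiplication by $v_1$ is injective on the relevant $H^0$ and $H^1$ of the source (via $1 \mapsto v_1$, $\partial \mapsto v_1\partial$, $\gamma_1 \mapsto v_1\gamma_1$, and so on), and reads off the cokernel.

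The substantive part is (1), the vanishing of $\bF_p(i)(\Zp)/v_1$ for $i \geq p+1$. The approach is to compute the cokernel chain complex directly. Multiplication by $v_1 = zE(z)^{p-1}t^{-(p-1)}$ is injective at the chain level on each corner of the truncated square (on the top row it shifts the $z$-exponent by $1$; on the bottom row, after reducing $E(z)^{p-1} \equiv z^{p-1}$ mod $p$, by $p$), so the cofiber is quasi-isomorphic to the cokernel, whose four corners are spanned by $E(z)^i t^{-i}$, $E(z)^{i-1}\nabla z\, t^{-i}$, $\{z^l t^{-i}\}_{l=0}^{p-1}$, and $\{z^l\nabla z\, t^{-i}\}_{l=0}^{p-1}$ respectively.

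Acyclicity of this cokernel is then established using the $\rF$-filtration spectral sequence. At $E_1$ the top-left class is cancelled by $(\can-\varphi)(E(z)^i t^{-i}) = -t^{-i}$ in the bottom-left, since the term $z^i t^{-i}$ vanishes in the cokernel for $i \geq p$. For each $1 \leq l \leq p-1$, the bottom-left class $z^l t^{-i}$ is cancelled by the leading term $l z^{l-1}\nabla z\, t^{-i}$ of $\nabla$. Only two $E_1$ classes then survive: the top-right generator at filtration $1$ and $z^{p-1}\nabla z\, t^{-i}$ at filtration $p$. The only possible differential between them is $d_{p-1}$, given by $(\can-\varphi^\nabla)(E(z)^{i-1}\nabla z\, t^{-i}) = z^{i-1}\nabla z\, t^{-i} - z^{p-1}\nabla z\, t^{-i}$; the key point, and the main technical obstacle, is that for $i \geq p+1$ the term $z^{i-1}\nabla z$ has $z$-exponent at least $p$ and hence vanishes in the cokernel, leaving $-z^{p-1}\nabla z\, t^{-i}$ as a nonzero $d_{p-1}$-image that kills both surviving classes. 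This mechanism breaks down precisely at $i = p$, where $z^{i-1} = z^{p-1}$ cancels exactly---which is why $\lambda_1$ and $\partial\lambda_1$ persist in weight $p$ and is consistent with part (2).
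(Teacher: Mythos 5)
Your proposal is correct. For the substantive part (1) you end up doing essentially the paper's computation: you form the same cokernel square (top corners spanned by $E(z)^it^{-i}$ and $E(z)^{i-1}\nabla z\,t^{-i}$, bottom corners by $z^lt^{-i}$ and $z^l\nabla z\,t^{-i}$ for $0\leq l\leq p-1$), and the three cancellations you exhibit ($E(z)^it^{-i}$ against $t^{-i}$ via $\can-\varphi$, $z^lt^{-i}$ against $lz^{l-1}\nabla z\,t^{-i}$ via $\nabla$, and the top-right generator against $z^{p-1}\nabla z\,t^{-i}$ via $\varphi^\nabla$, with $z^{i-1}\nabla z\,t^{-i}$ dying in the quotient exactly when $i\geq p+1$) are the same ones the paper uses; you merely organize them as an $\rF$-filtration spectral sequence with a final $d_{p-1}$, where the paper takes vertical (columnwise) cohomology first and then the horizontal $\nabla$-cohomology. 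Where you genuinely diverge is in parts (2)--(4): the paper recomputes the mod-$v_1$ square directly in weights $p-1$ and $p$, whereas you invoke the long exact sequence of the cofiber sequence $\bF_p(i-(p-1))(\Zp)\xrightarrow{v_1}\bF_p(i)(\Zp)\to\bF_p(i)(\Zp)/v_1$ together with the example computations. That route is legitimate and a bit slicker, but note what it leans on: injectivity of $v_1$ on the relevant cohomology of the source requires knowing that $v_1\partial$ is a basis element of $H^1(\bF_p(p-1)(\Zp))$ and $v_1\gamma_1$ one of $H^1(\bF_p(p)(\Zp))$ (and, for $p=2$, that $v_1^2$ and $\partial v_1^2$ are basis elements in weight $p$) --- these product identifications are indeed supplied by the examples --- as well as the standard vanishing of syntomic cohomology in negative weights for the range $i\leq p-2$. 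The paper's direct computation avoids these inputs and in addition produces explicit cocycle representatives in the reduced square, but both arguments establish the proposition.
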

\begin{proof}
  Since $v_1$ acts by multiplication with $zE(z)^{p-1}t^{-p+1}$ on the top row of the square and $z^p t^{-p+1}$ on the bottom row of the square, the mod $v_1$ reduced square for $\bF_p(*)(\Z_p)/v_1$ in weight $*=i$ for $i\geq p-1$ takes the form
  \[
    \begin{tikzcd}
      \bF_p \cdot E(z)^it^{-i} \rar\dar & \bF_p \cdot E(z)^{i-1}\nabla z t^{-i}\dar\\
      \bF_p \cdot \{t^{-i},\ldots, z^{p-1}t^{-i}\} \rar & \bF_p\cdot \{\nabla z t^{-i}, \ldots z^{p-1}\nabla z t^{-i}\}
    \end{tikzcd}
  \]
  Note that $\can(E(z)^it^{-i}) = z^it^{-i} = 0$ if $i\geq p$, and also $\can(E(z)^{i-1}\nabla z t^{-i}) = z^{i-1}\nabla z t^{-i}=0$ if $i\geq p+1$. Since $\varphi(E(z)^it^{-i}) = t^{-i}$ and $\varphi^\nabla(E(z)^{i-1}\nabla z t^{-i}) = z^{p-1}\nabla z t^{-i}$ (plus terms of higher $\rF$-filtration, but these vanish here in the quotient by $v_1$), the homology with respect to the vertical differentials takes the form
  \[
    \begin{tikzcd}
      0 \rar & 0\\
      \bF_p \cdot \{z t^{-i},\ldots, z^{p-1}t^{-i}\} \rar & \bF_p\cdot \{\nabla z t^{-i}, \ldots z^{p-2}\nabla z t^{-i}\}.
    \end{tikzcd}
  \]
  Since $\nabla(z^kt^{-i}) = kz^{k-1}\nabla z t^{-i}$ mod $\rF^{\geq k+1}$, this remaining horizontal differential is an isomorphism, and the first claim follows.

  In weights $i=p-1$ and $i=p$, we get the same square, but $\can$ is nontrivial. We have $(\can-\varphi)(E(z)^it^{-i}) = -t^{-i} + z^it^{-i}$ and $(\can-\varphi^\nabla)(E(z)^{i-1}\nabla z t^{-i}) = z^{i-1} \nabla z t^{-i} - z^{p-1}\nabla z t^{-i}$ mod $\rF^{\geq p+1}$. For $i=p$, this means the homology with respect to the vertical differentials takes the form
  \[
    \begin{tikzcd}
      0 \rar & \bF_p\cdot \lambda_1 \\
      \bF_p \cdot \{z t^{-i},\ldots, z^{p-1}t^{-i}\}\rar  & \bF_p\cdot \{\nabla z t^{-i}, \ldots z^{p-2}\nabla z t^{-i}, \partial\lambda_1\},
    \end{tikzcd}
  \]
  and the second claim follows. Analogously, for $i=p-1$ the homology with respect to the vertical differentials takes the form
  \[
    \begin{tikzcd}
      0 \rar & 0 \\
      \bF_p \cdot \{z t^{-i},\ldots, z^{p-1}t^{-i}\} \rar & \bF_p\cdot \{\nabla z t^{-i}, \ldots z^{p-3}\nabla z t^{-i}\},
    \end{tikzcd}
  \]
  and the horizontal differential leaves just a single class represented by $z^{p-1}t^{-p+1}$ in the lower left corner. This is $\gamma_{p-1}$ described in Example \ref{exm:weightpminus1} above. This proves the $i=p-1$ part of the third claim. For weight $i<p-1$, the mod $v_1$ reduced version looks just like the unreduced version. So the remaining statements follow from Examples \ref{exm:weighti} and \ref{exm:weight0}.
\end{proof}

\begin{cor} 
  \label{cor:zp-syntomic}
  The cohomology of $\bF_p(*)(\bZ_p)$ admits a basis given by the elements $v_1^k$, $v_1^k\partial$, $v_1^k\gamma_i$ for $1\leq i\leq p-1$, $v_1^k \lambda_1$ and $v_1^k\partial\lambda_1$.
\end{cor}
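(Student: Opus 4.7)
The plan is to bootstrap the result from the computation of $\bF_p(*)(\bZ_p)/v_1$ given in the preceding proposition, together with the explicit low-weight calculations of Examples~\ref{exm:weight0}--\ref{exm:weightpminus1} and the weight $p$ example.

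The key structural input is the cofiber sequence
\[
\bF_p(*-p+1)(\bZ_p) \xrightarrow{v_1} \bF_p(*)(\bZ_p) \to \bF_p(*)(\bZ_p)/v_1,
\]
which yields a long exact sequence in cohomology. By part~(1) of the preceding proposition, $\bF_p(*)(\bZ_p)/v_1$ vanishes in all weights $* \geq p+1$. Hence for every $i \geq p+1$, multiplication by $v_1$ induces an isomorphism $H^*(\bF_p(i-p+1)(\bZ_p)) \xrightarrow{\sim} H^*(\bF_p(i)(\bZ_p))$. In particular, if the claim is known for $H^*(\bF_p(i-p+1)(\bZ_p))$, it follows for $H^*(\bF_p(i)(\bZ_p))$ by multiplying the basis through by $v_1$.

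This reduces the problem to checking the claimed basis in the finitely many weights $0 \leq i \leq p$. I would then match up the lists of generators directly with the explicit computations already done: Example~\ref{exm:weight0} produces $1$ and $\partial$ in weight $0$; Example~\ref{exm:weighti} produces $\gamma_i$ in weight $i$ for $1 \leq i \leq p-2$; Example~\ref{exm:weightpminus1} produces $v_1$, $\gamma_{p-1}$, and $v_1\partial$ in weight $p-1$; and the weight $p$ example produces $v_1\gamma_1$, $\lambda_1$, and $\partial\lambda_1$ (plus $v_1^2$ and $\partial v_1^2$ when $p=2$). Each such class appears in the list in the statement, and conversely every element $v_1^k \cdot x$ with $x$ in the claimed generating set that lies in weight $\leq p$ is accounted for.

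The main subtlety is purely bookkeeping: for each weight $i$ in the range $[0,p]$ one needs to verify that the cohomology groups and their bases as produced by the case-by-case computations coincide with what is obtained by listing all products $v_1^k \cdot x$ with $x \in \{1, \partial, \gamma_1,\ldots,\gamma_{p-1}, \lambda_1, \partial\lambda_1\}$ landing in that weight. The only overlaps to check arise in weight $p-1$ (where both $\gamma_{p-1}$ and $v_1$, $v_1\partial$ contribute) and weight $p$ (where $v_1\gamma_1$ meets $\lambda_1$ and $\partial\lambda_1$), and in each case the counts of basis elements match. With this matched, the induction propagates to all higher weights and gives the desired $\bF_p[v_1]$-basis.
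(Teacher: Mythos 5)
Your argument is correct, and it is essentially an unrolled version of the paper's one-line proof rather than a literal copy of it. The paper invokes the $v_1$-Bockstein spectral sequence, whose $E_1$-page is $H^*(\bF_p(*)(\bZ_p)/v_1)\otimes\bF_p[v_1]$ by the preceding proposition, and which degenerates because every listed generator of the mod $v_1$ cohomology ($1$, $\partial$, $\gamma_i$, $\lambda_1$, $\partial\lambda_1$) was constructed as an honest class in $H^*(\bF_p(*)(\bZ_p))$; this uses all four parts of the proposition but no case-by-case check. You instead use the long exact sequence of the cofiber sequence $\bF_p(*-p+1)(\bZ_p)\xrightarrow{\;v_1\;}\bF_p(*)(\bZ_p)\to\bF_p(*)(\bZ_p)/v_1$ together with only part (1) of the proposition (acyclicity of the mod $v_1$ complex in weights $\geq p+1$) to conclude that $v_1$ acts isomorphically on cohomology above weight $p$, and you take the explicit computations in the weight $0$ through weight $p$ examples as base cases for an induction on the weight. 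The induction does close up, since every generator has weight at most $p$, so each claimed basis element in weight $\geq p+1$ is $v_1$ times a claimed basis element in weight lower by $p-1$; and the low-weight bookkeeping you defer to is indeed already carried out in the examples, including the extra classes $v_1^2$ and $\partial v_1^2$ at $p=2$. In short, both routes rest on the same cofiber sequence and the same mod $v_1$ computation; yours trades the spectral-sequence formalism (and the lifting-of-generators observation) for a finite amount of explicit matching in weights $\leq p$, which is a perfectly valid and slightly more elementary packaging.
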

\begin{proof}
  This follows immediately from the $v_1$ Bockstein spectral sequence, since all the elements described above as generators of the cohomology of $\bF_p(*)(\bZ_p)/v_1$ lift to the cohomology of $\bF_p(*)(\bZ_p)$ by construction.
\end{proof}

\subsection{Computation of $H^2(\F_p (*) (\Z/p^n))$}\label{sec:final}
We now study $\bF_p(*)(\bZ/p^n)$. Under the map $\bZ_p\to \bZ/p^n$, the classes introduced above map to classes in $H^*(\bZ_p(*)(\bZ/p^n))$. The main result of this paper is the following:
\begin{thm}
  \label{thm:mainthm}
  $H^2(\bF_p(*)(\bZ/p^n))$ admits a basis given by the elements $v_1^k\partial \lambda_1$ with $k\leq p^{n-2}-1$.
\end{thm}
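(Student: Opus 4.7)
The plan is to combine the surjection of \Cref{thm:surjective} with the explicit basis of $H^2(\bF_p(*)(\bZ_p))$ in \Cref{cor:zp-syntomic} to reduce the theorem to two assertions: that $v_1^{p^{n-2}}\partial\lambda_1 = 0$ in $H^2(\bF_p(*)(\bZ/p^n))$, and that $v_1^k\partial\lambda_1 \neq 0$ for $0\le k\le p^{n-2}-1$. Since surjectivity guarantees that every class in $H^2(\bF_p(*)(\bZ/p^n))$ lifts to $H^2(\bF_p(*)(\bZ_p))$, where \Cref{cor:zp-syntomic} supplies the basis $\{v_1^k\partial\lambda_1\}_{k\ge 0}$, and since these candidate classes lie in pairwise distinct weights, the two assertions together will yield the claimed basis.

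The non-vanishing assertion is essentially free: as explained in the introduction, the section statement of \cite[Theorem 1.1]{HLS} shows that for $k<p^{n-2}$, the class $v_1^k\partial\lambda_1$, which is nonzero in $H^2(\bF_p(*)(\bZ_p))/v_1^{p^{n-2}}$ by \Cref{cor:zp-syntomic}, remains nonzero after mapping into $H^2(\bF_p(*)(\bZ/p^n))/v_1^{p^{n-2}}$, and hence must be nonzero before the quotient as well.

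For the vanishing, I would work directly in the mod-$p$ reduction of the square of \cite{kzpn} for $R=\bZ/p^n$. Mimicking \Cref{sec:Zp-comp}, my first move would be to compute $\bF_p(*)(\bZ/p^n)/v_1$ by an explicit analysis of the truncated square, this time retaining the extra basis generators $f_u$ of $\rF$-filtration $np^u$ that distinguish $\bZ/p^n$ from $\bZ_p$; the $v_1$-Bockstein spectral sequence would then read off the $v_1$-torsion order of $\partial\lambda_1$. Alternatively, one can aim for an explicit coboundary in the square: using \Cref{rem:products}, the representative of $v_1^{p^{n-2}}\partial\lambda_1$ in the bottom-right corner has leading $\rF$-filtration term of the shape $\pm z^{p^{n-1}+p-1}\nabla z\, t^{-p^{n-2}(p-1)-p}$, and one would try to exhibit a combination of monomials in the $f_u$'s and $z$ from the bottom-left corner whose $\nabla$-image matches this, modulo the image of $(\can-\varphi^\nabla)$ from the top-right. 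The numerical bound $p^{n-2}$ is expected to emerge from the threshold at which the $\rF$-filtrations achievable by such $f_u$-monomials first catch up with the $\rF$-filtration $\sim p^{n-1}$ of the representative.

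The hardest step will be the explicit construction just sketched. Unlike \Cref{sec:Zp-comp}, which never had to touch the $f_u$'s, here one genuinely needs to know how $\nabla$, $\varphi$ and $\varphi^\nabla$ behave on monomials involving the $f_u$'s, and one must also keep track of higher-$\rF$-filtration corrections to the representatives of $v_1$, $\lambda_1$ and $\partial\lambda_1$ beyond leading order, so that the chain-level identity holds exactly and not just modulo higher filtration. I would expect that once the mod-$v_1$ computation is carried out carefully, the Bockstein route is the cleaner of the two, since it simultaneously rules out any extra $H^2$-classes beyond the $v_1^k\partial\lambda_1$ and automatically pins down the correct vanishing threshold.
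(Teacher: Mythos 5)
Your reduction of the theorem to the two assertions (vanishing of $v_1^{p^{n-2}}\partial\lambda_1$ and nonvanishing of $v_1^{k}\partial\lambda_1$ for $k<p^{n-2}$) via \Cref{thm:surjective} and \Cref{cor:zp-syntomic} is exactly right, and your nonvanishing argument via the invariance statement of \cite[Theorem 1.1]{HLS} is essentially the paper's (the paper phrases it by comparing $\Z/p^n$ with the exterior algebra $\Lambda_{\Z_p}(\varepsilon)$, whose reduced syntomic cohomology mod $v_1^{p^{n-2}}$ agrees with that of $\Z/p^n$ and retracts onto that of $\Z_p$). The genuine gap is in the vanishing step, which is the heart of the theorem and which you only sketch. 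Both of your proposed routes require chain-level control that is not available from the recollections and that you do not supply: the formulas recalled from \cite{kzpn} give $\can$ and $\varphi$ on monomials involving the $f_j$, but not $\nabla$ or $\varphi^\nabla$ on such monomials, so neither the full mod-$v_1$ computation of $\F_p(*)(\Z/p^n)$ nor an explicit coboundary in the bottom-right corner can be produced as stated. Moreover, your proposed leading term $\pm z^{p^{n-1}+p-1}\nabla z\, t^{-p^{n-2}(p-1)-p}$ for a representative of $v_1^{p^{n-2}}\partial\lambda_1$ implicitly computes a product of classes in bidegrees $(1,0)$ and $(0,1)$ naively; the paper explicitly warns (end of \Cref{rem:products}) that such products are subtle because the identification map $\theta$ is nonlinear, so this representative is not justified. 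Finally, the claim that the threshold $p^{n-2}$ ``is expected to emerge'' from filtration bookkeeping is precisely what needs proof, and nothing in your sketch produces it.

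The paper's actual mechanism avoids $\nabla$ entirely and is worth internalizing: one shows that the bottom-left representative $z^{p^{n-1}}t^{-p^{n-1}+p^{n-2}}$ of $v_1^{p^{n-2}}\partial$ lies in the image of $\can-\varphi$, so that $v_1^{p^{n-2}}\partial$ is represented by a cocycle supported purely in the top-right corner (bidegree $(0,1)$). This uses only the $\can$ and $\varphi$ formulas on $f_j$-monomials, via an induction that pushes the error term from $f_j$ to $f_{j+1}$, terminating because $\rF^{\geq in}\subseteq \rN^{\geq i}$ lets one truncate above $\rF$-filtration $in$; this is exactly where the exponent $p^{n-2}$ enters. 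Since $\lambda_1$ is also represented by a cocycle in bidegree $(0,1)$, \Cref{rem:products}(3) gives $v_1^{p^{n-2}}\partial\cdot\lambda_1=0$ with no need to evaluate any product involving $\nabla$-terms. Without this (or an equally concrete substitute), your proposal does not yet constitute a proof of the vanishing, and hence not of the theorem.
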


In light of \Cref{thm:surjective} and \Cref{cor:zp-syntomic}, it will suffice to check $v_1^{p^{n-2}}\partial\lambda_1=0$ as well as the nonvanishing of $v_1^k\partial\lambda_1$ for $k<p^{n-2}$.

To avoid explicitly dealing with the terms of the square involving $\nabla$, we use the product structure to reduce the vanishing of $v_1^{p^{n-2}}\partial\lambda_1$ to a claim about $(\can-\varphi)$. We first check the following:

\begin{lem}
  In the square computing $\bF_p(*)(\bZ/p^n)$, $v_1^{p^{n-2}}\partial$ may be represented by an element supported in the top right corner of the square.
\end{lem}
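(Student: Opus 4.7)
My plan is to alter the natural representative of $v_1^{p^{n-2}}\partial$ by the coboundary of a suitable element in the top-left corner, so that the resulting cocycle has vanishing bottom-left component and is therefore supported entirely in the top-right corner.

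The starting point is the product formula of \Cref{rem:products}(2). Since $v_1^{p^{n-2}}$ is a cycle of bidegree $(0,0)$, the product $v_1^{p^{n-2}}\partial$ is represented in the square by the cocycle $(0,\varphi(v_1^{p^{n-2}}))$. Using the explicit Frobenius formula $\varphi(z^k E(z)^i t^{-i})=z^{pk} t^{-i}$, one computes that its bottom-left component is $z^{p^{n-1}}t^{-(p-1)p^{n-2}}$.

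It therefore suffices to find an element
\[\eta \in \rN^{\geq (p-1)p^{n-2}}\Prismhat^{(1)}_{\bZ/p^n/\bZ_p\llbracket z\rrbracket}\{(p-1)p^{n-2}\}/p\]
with $(\can-\varphi)(\eta)=z^{p^{n-1}}t^{-(p-1)p^{n-2}}$: subtracting $d\eta$ from the above representative then yields the cohomologous cocycle $(-\nabla\eta,0)$, supported entirely in the top right.

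The natural guess $\eta=v_1^{p^{n-2}}$ does not work: by direct computation both $\can(v_1^{p^{n-2}})$ and $\varphi(v_1^{p^{n-2}})$ equal $z^{p^{n-1}}t^{-i}$, so $v_1^{p^{n-2}}$ is itself a cycle. The key observation is that in $\Prismhat^{(1)}_{\bZ/p^n/\bZ_p\llbracket z\rrbracket}$ the element $z^{p^{n-1}}$ is not a basis element (the basis imposes $k<n$), and must be rewritten using the $f_u$ generators through the structural relations between $z^n$, $E(z)$, $p$, and $f_0$. Using the behavior of $\varphi$ on $f_u$'s (roughly $\varphi(f_u)\sim f_{u+1}$), I would construct $\eta$ as a linear combination of monomials of the form $E(z)^\bullet z^\bullet\prod f_u^{\bullet}t^{-i}$ lying in the required Nygaard filtration, designed so that the mod-$p$ contributions of $\can$ and $\varphi$ telescope and leave only the residual term $z^{p^{n-1}}t^{-i}$.

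The main obstacle is the filtration bookkeeping: verifying that $\eta$ genuinely lies in $\rN^{\geq (p-1)p^{n-2}}$ while producing the correct $(\can-\varphi)$-image modulo $p$, taking careful account of the precise Nygaard weights contributed by each factor and the $p$-adic relations defining the $f_u$. Once $\eta$ is exhibited, the conclusion of the lemma follows at once from the total-complex differential.
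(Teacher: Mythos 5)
Your reduction is exactly the one the paper makes: by Remark \ref{rem:products}(2), $v_1^{p^{n-2}}\partial$ is represented by $\varphi(v_1^{p^{n-2}})=z^{p^{n-1}}t^{-(p-1)p^{n-2}}$ in the bottom left corner, and the lemma becomes the statement that this element lies in the image of $\can-\varphi$ on $\rN^{\geq i}\Prismhat^{(1)}_{(\bZ/p^n)/\bZ_p\llbracket z\rrbracket}\{i\}/p$ for $i=(p-1)p^{n-2}$; you also correctly guess the mechanism (rewrite $z^{p^{n-1}}t^{-i}=z^{p^{n-1}-n}f_0t^{-i}$ using $f_0=z^n$ and push through the $f_j$'s via $\varphi(f_j)\sim f_{j+1}$). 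But the proposal stops exactly where the mathematical content begins: you never exhibit $\eta$. In the paper the correction terms are $u\,E(z)^{p^{n-1}-p^{n-2}-p^j}z^{p^{n-2}-(n-j-1)p^j}f_jt^{-i}$, and one must verify both that these lie in $\rN^{\geq i}$ (the Nygaard weight is $(p^{n-1}-p^{n-2}-p^j)+p^j=i$ on the nose, and the $z$-exponent is nonnegative precisely because $p^{n-2-j}\geq n-1-j$ for $j\leq n-2$) and the explicit identities showing that $\can$ of such a term reproduces the term of index $j$ while $\varphi$ of it is a unit multiple of the term of index $j+1$. None of this ``bookkeeping'' is incidental; it is the substance of the lemma, and you explicitly defer it.

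There is also a structural point your sketch does not address: the telescope is a priori infinite, so one needs a reason it terminates or converges. The paper handles this by noting $\rF^{\geq nk}\subseteq\rN^{\geq k}$, so the two-term complex $\can-\varphi$ is quasi-isomorphic to its truncation below $\rF$-filtration $in$, in which the terms involving $f_{n-1}$ and $f_n$ (of $\rF$-filtration at least $np^{n-1}$) already vanish; without this truncation one would instead have to argue convergence of the infinite sum in the $\rF$-complete module and that the limit still lies in $\rN^{\geq i}$, which you do not do. As it stands, your text is a correct outline of the paper's own strategy, but the key construction and its filtration verifications are missing, so it does not yet constitute a proof.
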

\begin{proof}
  Recall that $v_1^{p^{n-2}}\partial$ can be represented by $z^{p^{n-1}}t^{-p^{n-1}+p^{n-2}}$ in the bottom left corner of the square. The claim is thus equivalent to proving that $z^{p^{n-1}}t^{-p^{n-1}+p^{n-2}}$ is in the image of $\can-\varphi$.  This follows from the following lemma for $j=0$, since $f_0=z^n$.
\end{proof}

\begin{lem}
  For $i=p^{n-1}-p^{n-2}$, any term of the form $u z^{p^{n-1}-(n-j)p^j}f_jt^{-i}$ for $j\leq n$ and arbitrary $u$, is in the image of $\can-\varphi$ mod $p$. 
\end{lem}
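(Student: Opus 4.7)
The plan is to exhibit, for each $j \leq n$, an explicit lift $w_j$ in the top-left corner $\rN^{\geq i}\Prismhat^{(1)}\{i\}$ of the square such that $(\can - \varphi)(u w_j) \equiv u z^{p^{n-1} - (n-j)p^j} f_j t^{-i} \pmod{p}$. The natural candidate is
\[
w_j := E(z)^{\max(i-p^j,\, 0)} z^{k_j} f_j t^{-i},
\]
where $k_j \geq 0$ is chosen so that $\max(i-p^j, 0) + k_j = p^{n-1} - (n-j)p^j$; a case analysis yields $k_j = p^{n-2} - (n-j-1)p^j$ for $j \leq n-2$ and $k_j = p^{n-1} - (n-j)p^j$ for $j \geq n-1$, both of which are non-negative (using $p^{n-2-j} \geq n-j-1$ for the first case).

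First I would compute $\can(u w_j)$ modulo $p$: since $\can$ is the inclusion and $E(z) \equiv z \pmod{p}$, one immediately obtains $\can(u w_j) \equiv u z^{p^{n-1} - (n-j)p^j} f_j t^{-i} \pmod p$, which matches the target by the choice of $k_j$.

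The core of the argument is to show $\varphi(u w_j) \equiv 0 \pmod p$. Applying the divided Frobenius formula gives $\varphi(w_j) = z^{p k_j} \cdot [\varphi(f_j)/\varphi(E(z))^{p^j}] \cdot t^{-i}$, so the problem reduces to $\varphi(f_j)/\varphi(E(z))^{p^j} \equiv 0 \pmod p$ in $\Prismhat^{(1)}$. For $j = 0$ this is immediate: $\varphi(f_0) = z^{np} = f_0^p = p!\, f_1 \equiv 0 \pmod p$ via the divided-power relation. For general $j$ the analogous conclusion follows by iterating the same divided-power relation, yielding $v_p(\varphi(f_j)) \geq p^{j+1}$ in $\Prismhat^{(1)}$ for $\Z/p^n$.

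The hard part will be pinning down the precise form of the divided Frobenius on $f_j$ and tracking the unit corrections from \cite{kzpn} (which, as noted in the excerpt, are $\equiv 1 \pmod p$). Should the direct vanishing approach prove delicate, a robust fallback is a telescoping argument: show that $(\can - \varphi)(u w_j) \equiv u(y_j - y_{j+1}) \pmod p$ with $y_j := z^{p^{n-1} - (n-j)p^j} f_j t^{-i}$, and then sum $W := u \sum_{k \geq j} w_k$ in the $\rF$-complete topology. The sum converges since the $\rF$-filtration of $w_k$ grows like $p^{n-1} + k p^k \to \infty$, and telescoping yields $(\can - \varphi)(W) \equiv u y_j \pmod p$, as required.
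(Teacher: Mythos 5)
Your choice of lift $w_j = E(z)^{\max(i-p^j,0)}z^{k_j}f_jt^{-i}$ is exactly the one used in the paper, and your computation of $\can(uw_j)$ mod $p$ is correct. But the core claim of your main argument --- that $\varphi(uw_j)\equiv 0 \pmod p$ because $\varphi(f_j)/\varphi(E(z))^{p^j}$ is divisible by $p$ --- is false, and it is false in a way that inverts the actual structure of the prismatic envelope. The elements $f_{j+1}$ are not divided powers of $f_0$ (the relation $f_0^p = p!\,f_1$ you invoke does not hold here); rather, $f_{j+1}$ is essentially the \emph{divided Frobenius} of $f_j$, and by \cite[Lemma 3.26]{kzpn} one has $\varphi(f_jt^{-p^j}) = \lambda_j f_{j+1}t^{-p^{j+1}}$ with $\lambda_j$ a \emph{unit} in $\bZ_p\llbracket z\rrbracket$. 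Concretely, the paper computes
\[
  \varphi\bigl(uE(z)^{p^{n-1}-p^{n-2}-p^j}z^{p^{n-2}-(n-j-1)p^j}f_jt^{-i}\bigr) = \varphi(u)\lambda_j z^{p^{n-1}-(n-j-1)p^{j+1}} f_{j+1}t^{-i},
\]
a unit multiple of a basis monomial, hence nonzero mod $p$. If your vanishing claim were true, a single application of $\can-\varphi$ would finish the proof and no induction over $j$ would be needed; the entire point of the paper's argument is that applying $\varphi$ to the lift produces the \emph{next} term in the family (with $j$ replaced by $j+1$), so one must iterate.

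Your ``fallback'' telescoping argument is in fact the correct route and is essentially the paper's proof: the paper runs the same induction, terminating it by the truncation $\rF^{<in}$ (using $\rF^{\geq nk}\subseteq\rN^{\geq k}$, so that the $f_{n-1}$- and $f_n$-terms, which lie in $\rF^{\geq np^{n-1}}$, are already zero in the truncated, quasi-isomorphic two-term complex), whereas you would instead sum the lifts $\rF$-adically using completeness --- a legitimate alternative. However, as written your fallback is not yet a proof: it presupposes the identity $(\can-\varphi)(uw_j)\equiv u(y_j-y_{j+1})$, which you neither derive nor cite (and which your main argument contradicts), and the error term actually carries the coefficient $\varphi(u)\lambda_j$ rather than $u$, so the telescoping sum must be built with recursively defined coefficients $u_{j}=u$, $u_{k+1}=\varphi(u_k)\lambda_k$ (harmless, since the statement allows arbitrary $u$, but it needs to be said). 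To repair the write-up, replace the vanishing claim by the Frobenius formula $\varphi(f_jt^{-p^j})=\lambda_jf_{j+1}t^{-p^{j+1}}$ from \cite[Lemma 3.26]{kzpn} and then run either the paper's truncation-terminated induction or your completed telescoping sum.
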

\begin{proof}
  Since elements of high $\rF$-filtration in $\Prismhat^{(1)}_{(\bZ/p^n)/\bZ_p\llbracket z\rrbracket}$ have high Nygaard filtration, specifically $\rF^{\geq nk}\subseteq \rN^{\geq k}$ by \cite[Proposition 3.34 and Remark 3.35]{kzpn}, the $2$-term complex
  \[
    \can-\varphi: \rN^{\geq i} \Prismhat^{(1)}_{R/\bZ_p\llbracket z\rrbracket}\{i\}/p
    \to \Prismhat^{(1)}_{R/\bZ_p\llbracket z\rrbracket}\{i\}/p
  \]
  is quasi-isomorphic to the truncated one of the form
  \[
    \can-\varphi: \rF^{<in}  \rN^{\geq i} \Prismhat^{(1)}_{R/\bZ_p\llbracket z\rrbracket} \{i\}
\to \rF^{<in} \Prismhat^{(1)}_{R/\bZ_p\llbracket z\rrbracket} \{i\}
  \]
  So it suffices to decide that the terms in question lie in the image modulo $\rF$-filtration $\geq in$.

  For $j=n$ or $n-1$, we have $f_j\in \rF^{\geq np^{n-1}}$, which is already beyond that filtration. It therefore suffices to argue that for $j<n-1$, $uz^{p^{n-1}-(n-j)p^j}f_jt^{-i}$ agrees modulo $p$ and the image of $\can-\varphi$ with a term of the same form but with $j+1$ instead of $j$. And indeed we have:
\[
  \can(u E(z)^{p^{n-1}-p^{n-2}-p^j} z^{p^{n-2} - (n-j-1)p^j} f_j t^{-i}) = uz^{p^{n-1}-(n-j)p^j} f_j t^{-i},
\]
using that $j\leq n-2$ and $p^{n-2-j} \geq n-1-j$, and
\[
  \varphi(uE(z)^{p^{n-1}-p^{n-2}-p^j}z^{p^{n-2}-(n-j-1)p^j}f_jt^{-i}) = \varphi(u)\lambda_j z^{p^{n-1}-(n-j-1)p^{j+1}} f_{j+1}t^{-i},
\]
  finishing the inductive step. Here $\lambda_j$ is a unit in $\bZ_p\llbracket z\rrbracket$ appearing in the explicit formulas for $\varphi(f_jt^{-p^j})$ (\cite[Lemma 3.26]{kzpn}), and not to be confused with the explicit generator $\lambda_1\in H^1(\bF_p(p)(\bZ_p))$ from above.
\end{proof}

\begin{cor}\label{cor:vanishing}
We have  $v_1^{p^{n-2}}\partial\lambda_1=0$
\end{cor}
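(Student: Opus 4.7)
The plan is to exhibit both $v_1^{p^{n-2}}\partial$ and $\lambda_1$ as cohomology classes represented purely in the top right corner of the square computing $\bF_p(*)(\Z/p^n)$, and then appeal to \Cref{rem:products}(3), which asserts that any two classes supported in the bicosimplicial bidegree $(1,0)$ multiply to zero (their Alexander--Whitney product lies in bidegree $(2,0)$, which maps to zero in the square).

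The first ingredient is immediate: the preceding lemma directly produces a representative of $v_1^{p^{n-2}}\partial$ supported in the top right corner of the square for $\Z/p^n$. For the second ingredient, I would invoke the construction of $\lambda_1$ carried out in the weight $i=p$ example, where $\lambda_1$ was defined as the lift of the cohomology class arising from $E(z)^{p-1}\nabla z\, t^{-p}$ to a cycle in the untruncated complex over $\Z_p$ purely supported in the top right corner, with leading term $E(z)^{p-1}\nabla z\, t^{-p}$ together with corrections of higher $\rF$-filtration but still in the top right corner. Naturality of the base change $\bF_p(*)(\Z_p)\to \bF_p(*)(\Z/p^n)$ induced by the quotient $\Z_p \twoheadrightarrow \Z/p^n$ preserves this top-right support, so the image of $\lambda_1$ in $\bF_p(*)(\Z/p^n)$ is again represented purely in the top right corner.

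With both representatives in hand, \Cref{rem:products}(3) yields $v_1^{p^{n-2}}\partial\lambda_1 = 0$ at once. I do not anticipate any genuine obstacle: the substantive work has already been absorbed into the preceding lemma (which required the inductive argument producing $uz^{p^{n-1}-(n-j)p^j}f_j t^{-i}$ in the image of $\can-\varphi$) and into the construction of $\lambda_1$ in the weight $p$ example. The only subtlety worth flagging is that \Cref{rem:products}(3) applies to lifts in the full bicosimplicial resolution rather than to the four-term square itself, but both representatives here were constructed precisely as such full bicosimplicial lifts, so no further verification is needed.
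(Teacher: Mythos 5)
Your proposal is correct and is essentially the paper's own proof: both arguments represent $v_1^{p^{n-2}}\partial$ in the top right corner via the preceding lemma, take the top-right representative of $\lambda_1$ from the weight-$p$ computation, and conclude by \Cref{rem:products}(3). Your extra remarks on naturality under $\Z_p \twoheadrightarrow \Z/p^n$ and on lifting to the bicosimplicial resolution are fine but not needed, since \Cref{rem:products}(3) already supplies the lift for classes supported in bidegree $(1,0)$.
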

\begin{proof}
  As both $v_1^{p^{n-2}}\partial$ and $\lambda_1$ are represented by an element supported in the top right corner, their product is zero by \Cref{rem:products}(3).
\end{proof}

\begin{prop}\label{prop:nonvanishing}
We have $v_1^{p^{n-2}-1}\partial\lambda_1\neq 0$.
\end{prop}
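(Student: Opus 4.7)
The plan is to execute the strategy outlined in the introduction: use \cite[Theorem 1.1]{HLS} to transport the nonvanishing of $v_1^{p^{n-2}-1}\partial\lambda_1$ from the $\Z_p$ calculation of \Cref{sec:Zp-comp} over to $\Z/p^n$.

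First I would note that by \Cref{cor:zp-syntomic}, the cohomology $H^*(\F_p(*)(\Z_p))$ is free as an $\F_p[v_1]$-module on the generators $1, \partial, \gamma_1, \ldots, \gamma_{p-1}, \lambda_1, \partial\lambda_1$. In particular $v_1$ acts injectively, so $v_1^{p^{n-2}-1}\partial\lambda_1$ remains nonzero in the quotient $H^2(\F_p(*)(\Z_p))/v_1^{p^{n-2}}$. The long exact sequence associated to the cofiber sequence $\F_p(*)(\Z_p) \xrightarrow{v_1^{p^{n-2}}} \F_p(*)(\Z_p) \to \F_p(*)(\Z_p)/v_1^{p^{n-2}}$ then identifies this quotient with $H^2(\F_p(*)(\Z_p)/v_1^{p^{n-2}})$, in which the class remains nonzero.

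Next, I would exploit the naturality square
\[
  \begin{tikzcd}
    H^2(\F_p(*)(\Z_p)) \rar \dar & H^2(\F_p(*)(\Z/p^n)) \dar \\
    H^2(\F_p(*)(\Z_p)/v_1^{p^{n-2}}) \rar & H^2(\F_p(*)(\Z/p^n)/v_1^{p^{n-2}}).
  \end{tikzcd}
\]
By \cite[Theorem 1.1]{HLS}, the natural map $\F_p(*)(\Z_p)/v_1^{p^{n-2}} \to \F_p(*)(\Z/p^n)/v_1^{p^{n-2}}$ admits a section, so the bottom horizontal arrow is split injective. Hence the image of $v_1^{p^{n-2}-1}\partial\lambda_1$ in the bottom-right entry is nonzero, and by commutativity of the square this forces $v_1^{p^{n-2}-1}\partial\lambda_1 \neq 0$ already in the top-right entry, as required. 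The only substantive input is the splitting provided by \cite[Theorem 1.1]{HLS}; the remainder is a formal diagram chase, so I do not anticipate any real obstacle.
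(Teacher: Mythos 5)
Your argument is correct, and it rests on exactly the two inputs the paper uses: the crystallinity statement of \cite[Theorem 1.1]{HLS} and the freeness of $H^*(\F_p(*)(\Z_p))$ over $\F_p[v_1]$ from \Cref{cor:zp-syntomic}; in fact it is the route advertised in the introduction. The difference from the paper's written proof is one of packaging. The paper does not cite the splitting as a black box: it produces it, replacing $\Z/p^n$ by the exterior algebra $\Lambda_{\Z_p}(\varepsilon)$, which has the same derived mod $p^n$ reduction, and using that $\Z_p$ is a retract of $\Lambda_{\Z_p}(\varepsilon)$; this retraction is precisely what justifies the assertion that the natural map $\F_p(*)(\Z_p)/v_1^{p^{n-2}} \to \F_p(*)(\Z/p^n)/v_1^{p^{n-2}}$ is split injective (strictly speaking it admits a retraction rather than a ``section'', which is the property you actually use), so if you want your write-up to be self-contained you should include that one-line retraction argument, since \cite[Theorem 1.1]{HLS} as stated only gives factorization through derived reduction. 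In exchange, your direct chase through the naturality square (which commutes because the quotient by $v_1^{p^{n-2}}$ is functorial and the comparison map is $v_1$-linear) buys a small simplification: the paper argues by contradiction that the full group $H^2$ in weight $p+(p-1)(p^{n-2}-1)$ of $\F_p(*)(\Z/p^n)/v_1^{p^{n-2}}$ would vanish, which implicitly leans on the surjectivity of \Cref{thm:surjective}, whereas you only track the image of the single class and need nothing beyond \Cref{cor:zp-syntomic}, the vanishing of $H^3$ (the complexes are totalizations of squares), and the splitting.
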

\begin{proof}
  By \cite[Theorem 1.1]{HLS}, $\bF_p(*)(R) / v_1^{p^{n-2}}$ depends only on the derived mod $p^n$ reduction $R\otimes_{\bZ} \bZ/p^n$. In particular, we obtain the same value for $R$ given by $\bZ/p^n$ and $R'$ given by an exterior algebra $\Lambda_{\bZ_p}(\varepsilon)$ on a degree $1$ generator (viewed as animated ring). If $v_1^{p^{n-2}-1}\partial\lambda_1=0$, then this would imply that also $H^2(\bF_p(*)(\Lambda_{\bZ_p}(\varepsilon))/v_1^{p^{n-2}})$ vanishes in weight $*=p+(p-1)(p^{n-2}-1)$. But the latter has $H^2(\bF_p(*)(\bZ_p)/v_1^{p^{n-2}})$ as retract, which is nonzero by Corollary \ref{cor:zp-syntomic}.
\end{proof}

Together this proves Theorem \ref{thm:mainthm}.

\bibliographystyle{alpha}
\bibliography{bibliography}

\end{document}